\newcommand{\E}{\mathbb{E}}
\newcommand{\Z}{\mathbb{Z}}
\newcommand{\pp}{\mathbb{P}}
\newcommand{\kA}{\mathcal{A}}
\newcommand{\kB}{\mathcal{B}}
\newcommand{\kC}{\mathcal{C}}
\newcommand{\kO}{\mathcal{O}}
\newcommand{\kF}{\mathcal{F}}
\newcommand{\kG}{\mathcal{G}}
\newcommand{\kM}{\mathcal{M}}
\newcommand{\kE}{\mathcal{E}}
\newcommand{\kI}{\mathcal{I}}
\newcommand{\kL}{\mathcal{L}}
\newcommand{\pn}{\mathbb{P} }
\newcommand{\en}{\mathbb{E} }
\newcommand{\poi}{\textrm{Poi}}
\newcommand{\lf}{\lfloor}
\newcommand{\rf}{\rfloor}
\newcommand{\lin}{\left[\kern-0.15em\left[}
\newcommand{\rin} {\right]\kern-0.15em\right]}
\newcommand{\linf}{[\kern-0.15em [}
\newcommand{\rinf} {]\kern-0.15em ]}
\newcommand{\ilin}{\left]\kern-0.15em\left]}
\newcommand{\irin} {\right[\kern-0.15em\right[}
\newtheorem{lem}{Lemma}[section]
\newtheorem{prop}[lem]{Proposition}
\newtheorem{theo}[lem]{Theorem}
\newtheorem*{ack}{Acknowledgments}
\title[Contact process on  random geometric graphs ]
       {\bf Super-exponential extinction time of  the contact process on  random geometric graphs }
\author{Van Hao Can}
\address{Aix Marseille Universit\'e, CNRS, Centrale Marseille, I2M, UMR 7373, 13453 Marseille, France}
\address{Institute of Mathematics, Vietnam Academy of Science and Technology, 18 Hoang Quoc Viet, 10307 Ha Noi, Viet Nam}
\email{cvhao89@gmail.com}
 \keywords{Contact process; extinction time ;  random geometric graph.
} 
\subjclass[2010]{82C22; 60K35; 05C80.}
\begin{document}
\maketitle
\begin{abstract}
In this paper, we prove  lower and upper bounds for the extinction time of the contact process on  random geometric graphs with connection radius  tending to infinity. We obtain that for any infection rate $\lambda >0$, the  contact process on these graphs survives a time super-exponential in the number of vertices.
\end{abstract}

\section{Introduction}
We will study the contact process on  random geometric graphs (RGGs) in $d \geq 2$ dimensions  with intensity function $g=g_n(x)$ and  connection  radius  $R$, denoted by $G(n,R,g)$.

\vspace{0.2 cm}
 A RGG  is constructed as follows.  The vertex set is composed of the atoms of  a Poisson point process with intensity $g$ on $[ 0, \sqrt[d]{n}]^d$.  Then for any two vertices  $v  \neq w $, we draw an edge between them  if $\|v-w \| \leq R $, where $\|\cdot\|$ denotes the Euclidean norm in $\mathbb{R}^d$. We will assume throughout this paper  that there are positive constants $b$ and $B$, such that
\begin{align} \label{cdg}
0< b  \leq g(x) \leq B < + \infty  \quad \textrm{ for all } x,
\end{align}
 here and below,  we remove the subscript $n$ in the function $g_n$ for simplicity.

\vspace{0.2 cm}
 The contact process is one of the most studied interacting particle systems and is also often interpreted as a model to describe the spread of a virus in a network (see  for instance \cite{L}). Mathematically, it can be defined as follows: given a locally finite graph $G=(V,E)$ and $\lambda >0$, the contact process on $G$ with infection rate $\lambda$ is a pure jump Markov process $(\xi_t)_{t\geq 0}$ on $\{0,1\}^V$. Vertices of $V$ (also called sites) are regarded as individuals which are either infected (state $1$) or healthy (state $0$). By considering $\xi_t$ as a subset of $V$ via $\xi_t \equiv \{v: \xi_t(v)=1\},$ the transition rates are given by
\begin{align*}
\xi_t \rightarrow \xi_t \setminus \{v\} & \textrm{ for $v \in \xi_t$ at rate $1,$ and } \\
\xi_t \rightarrow \xi_t \cup \{v\} & \textrm{ for $v \not \in \xi_t$ at rate }  \lambda |\{w \in \xi_t : \{v,w\} \in E \}|,
\end{align*}
where $|A|$ is the cardinality of a set $A$.

Originally the contact process was studied on integer lattices or homogeneous trees. More recently, probabilists started  investigating this process on some families of random networks  like  configuration models, or  preferential attachment graphs, see for instance \cite{ BBCS, C,CD, CS, MMVY, MVY}.

\vspace{0.2 cm}
Random geometric graphs have been extensively studied for a long time by many authors, see in particular Penrose's book \cite{P}. Recently, these graphs have also been considered as models of wireless networks (see e.g. \cite{GK}). Therefore,  there has been interest in  processes occurring on them, including the contact process in both theoretical and practical approaches, see for  example  \cite{FSS,G,PJ}.

In this paper, we are in particular interested in the extinction time of the contact process,
\begin{align*}
\tau_n = \inf \{t: \xi^{1}_t = \varnothing\},
\end{align*}
where $(\xi^1_t)$ is the contact process on $G(n,R,g)$ starting with all nodes infected. 

 It has been shown that the contact process on  finite graphs dies out almost surely, thus  $\tau_n < \infty $ a.s. Now,  it is interesting to determine the order of  magnitude of $\tau_n$. For  sparse graphs, i.e.\  graphs in which the number of edges is of order the number of vertices, we will show that the extinction time is at most exponential in the number of vertices (see Lemma \ref{bgg}). On the other hand, we will show in Section 2.1 that the extinction time of the contact process on a complete graph is super-exponential in the number of vertices.  For the random geometric graph $G(n,g,R)$, we observe that w.h.p.\ the number of vertices   is $\Theta(n)$ and the graph locally looks like a complete graph (all vertices in a ball of radius $R/2$ form a clique).  Hence, we can expect that $\log \tau_n$ is super-linear in $n$  as in  the case of the complete graph. (Note that there are graphs which are not sparse but for which $\log \tau_n = \kO(n)$, for example the configuration model with infinite mean degree, see Theorem 1.2 (ii) in \cite{CS}). 

In \cite[Theorem 1.2]{G},  Ganesan considers the contact process on an equivalent model of $G(n,R,g)$ in $2$ dimensions. Translating to our model, Ganesan proves that if $R \rightarrow \infty$ and $R^2=\kO(\log n)$, then there exist  positive constants $c=c(\lambda)$ and $C=C(\lambda)$, such that w.h.p.\ $C n \log n \geq \log \tau_n \geq c n R^2/ \log n$. 

\vspace{0,2cm}
In our main result, we will  prove that in all dimensions larger than or equal to $2$,  for any $\lambda >0$ and for all $R$ large enough,  w.h.p.\ $\log \tau_n = \Theta( n \log (\lambda R^d))$. 
 \begin{theo} \label{trg} 
Let $d\geq 2$ and  $\tau_n$ be the extinction time of the contact process on the graph $G(n,R,g)$ with $g$ satisfying \eqref{cdg} starting from full occupancy. Then there exist positive constants $c$, $C$ and $K$ depending only on $d,b, B$, such that the following statements hold.
\begin{itemize}
\item[$(i)$] For any $R=R(n)$ and $\lambda=\lambda(n)$ satisfying  $n \geq R^d \geq K / ( \lambda \wedge 1)$, w.h.p.
 $$\tau_n \geq \exp(c n \log ( \lambda R^d) ) $$
 and 
 \[ \frac{\tau_n}{\en(\tau_n)}\ \mathop{\longrightarrow}^{(\mathcal L)}_{n\to \infty}  \  \kE(1), \]
with $\kE(1)$ an exponential random variable with mean one.
  \item[$(ii)$] For all $R>0$, w.h.p. 
 $$\tau_n \leq \exp(C n \log ( \lambda R^d)).$$
\end{itemize}   
 \end{theo}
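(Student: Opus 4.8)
\emph{Plan of proof.} I treat the upper bound $(ii)$ first, since the comparison it uses reappears in $(i)$.

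\textbf{Part $(ii)$.} By monotonicity of the contact process in the underlying graph, it suffices to bound $\tau_{G''}$ for a graph $G''\supseteq G(n,R,g)$ obtained by tiling $[0,\sqrt[d]{n}]^d$ into cubes of side $R/(2\sqrt d)$, declaring each cube a clique, and joining completely each pair of $\ell^\infty$-adjacent cubes; two points at Euclidean distance $\le R$ then always lie in one cube or in two adjacent cubes, so indeed $G(n,R,g)\subseteq G''$. By \eqref{cdg} and Poisson tail bounds, w.h.p.\ every cube carries at most $m:=C_0 R^d$ points, so $G''$ consists of $K=\kO(n/R^d)$ cliques of size $\le m$, each meeting at most $3^d$ others. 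I would then prove the elementary fact that, from \emph{any} configuration, the contact process on such a graph reaches $\varnothing$ within time $T_0=\kO(\log m+1)$ with probability at least $\exp(-C_1 n\log(\lambda R^d))$: one lets all cliques descend simultaneously, so that while a clique still has $\ell$ infected sites so do, up to constants, its neighbours, whence each recovery wins its race against reinfection with probability $\ge(1+C\lambda m)^{-1}$, and there are at most $Km$ such steps. Iterating over disjoint length-$T_0$ windows gives $\pn(\tau_{G''}>kT_0)\le(1-e^{-C_1 n\log(\lambda R^d)})^{k}$, and a choice of $k$ slightly above $e^{C_1 n\log(\lambda R^d)}$ yields $\tau_n\le\tau_{G''}\le\exp(Cn\log(\lambda R^d))$ w.h.p. (When $R$ is of constant order, $G(n,R,g)$ is genuinely sparse and one invokes Lemma~\ref{bgg} instead.)

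\textbf{Part $(i)$, geometry.} The driving idea is that a single clique of size $\asymp R^d$ already survives a time $\exp(cR^d\log(\lambda R^d))$ — this is the complete-graph estimate of Section~2.1, available because $R^d\ge K/(\lambda\wedge1)$ makes the clique deeply supercritical — and that the geometry lets one chain $\asymp n/R^d$ cliques together so that the survival times \emph{multiply}. Tile again into cubes of side $\asymp R$ (clique inside, adjacent cubes completely joined) and call a cube \emph{good} if it contains between $c_1 R^d$ and $c_2 R^d$ points. By \eqref{cdg} a cube is good with probability at least $1-\rho$, and enlarging the theorem's constant $K$ makes $\rho$ small enough that the good cubes dominate supercritical site percolation on $\Z^d$; hence w.h.p.\ they contain a connected sub-lattice $\kQ$ with $\asymp n/R^d$ sites and degree $\le 2d$. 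Letting $H\subseteq G(n,R,g)$ be the induced grid of cliques indexed by $\kQ$, we have $\tau_n\ge\tau_H$.

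\textbf{Part $(i)$, multiplication.} The complete-graph analysis provides, for a small $\epsilon>0$ and uniformly over the cliques of $H$: $(a)$ a clique with $\ge\epsilon R^d$ infected sites keeps $\ge\epsilon R^d$ infected throughout the next $T_1:=\exp(c_3 R^d\log(\lambda R^d))$ time units except with probability $\le\exp(-c_4 R^d\log(\lambda R^d))$, so its failure rate over that window is at most $\varrho:=\exp(-\tfrac12 c_4 R^d\log(\lambda R^d))$; and $(b)$ a clique with $\ge\epsilon R^d$ infected brings every neighbouring clique of $H$ to $\ge\epsilon R^d$ infected within one time unit except with probability $\le\exp(-c_5 R^d)$, which is immediate since each site of the neighbour then sees $\ge\epsilon R^d$ infected neighbours and is infected at rate $\gg1$. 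By the standard block/restart machinery, $(a)$--$(b)$ imply that the indicators ``clique $x$ is substantially infected'' ($x\in\kQ$) dominate a contact process on the finite connected bounded-degree graph $\kQ$ with infection rate $\Omega(1)$ and recovery rate $\le\varrho$. It then remains to show that such a deeply supercritical process on a connected bounded-degree graph with $N$ sites survives a time $\ge\exp(cN\log(1/\varrho))$: extinction forces all of $\kQ$ to be cleared at once, clearing a connected bounded-degree graph against reinfection costs at least $\varrho^{\,\kO(N)}$ per attempt, and there are $\kO(\varrho)$ attempts per unit time; with $N\asymp n/R^d$ and $\log(1/\varrho)\asymp R^d\log(\lambda R^d)$ this gives $\tau_H\ge\exp(cn\log(\lambda R^d))$ w.h.p.

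\textbf{Limit law and the main obstacle.} The convergence $\tau_n/\en(\tau_n)\to\kE(1)$ then follows from the usual metastability scheme: from full occupancy the process reaches the ``all good cliques substantially infected'' regime in time $\kO(\log n)$ w.h.p., from any configuration in that regime the extinction time exceeds $e^{cn\log(\lambda R^d)}$ w.h.p., and the process couples with the one from full occupancy within $o(\en(\tau_n))$ — exactly the hypotheses of the general exponential-limit lemma. The crux of the whole argument is the multiplication step: turning ``each clique survives $\exp(cR^d\log(\lambda R^d))$ and quickly reinfects its neighbours'' into the \emph{product} bound $\exp\!\big(c\,\tfrac{n}{R^d}\cdot R^d\log(\lambda R^d)\big)$. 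A naive union bound over cliques and time windows is hopelessly lossy; one genuinely needs the coarse-graining to a supercritical contact process on $\kQ$ together with the large-deviation lower bound for clearing it, and both require care with the conditioning and with the fact that $\kQ$ is only a percolation cluster rather than a clean box.
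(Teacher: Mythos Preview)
Your plan for $(ii)$ is sound and essentially recovers Lemma~\ref{bgg}: the paper simply proves the general bound $\tau_G\le |V|(2+4\lambda|E|/|V|)^{|V|}$ and plugs in $|V|\asymp n$, $|E|\asymp nR^d$, which is a cleaner packaging of the same idea.

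The real issue is the multiplication step in $(i)$. You reduce to showing that a deeply supercritical contact process on a connected bounded-degree graph $\kQ$ with $N$ sites survives time $\exp(cN\log(1/\varrho))$, and justify this by ``clearing a connected bounded-degree graph against reinfection costs at least $\varrho^{\kO(N)}$ per attempt''. That sentence \emph{is} the theorem; it is not a proof. A birth--death comparison on $|\xi_t|$ only gives positive drift while $|\xi_t|\lesssim 1/\varrho$, which is useless once $R^d$ is of constant order (then $\varrho$ is a fixed small number while $N\asymp n$). And an arbitrary connected bounded-degree graph need not contain a path of length $\asymp N$ --- a bounded-degree tree can have longest path $\kO(\log N)$ --- so you cannot silently fall back on a one-dimensional argument. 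Getting the factor $\log(1/\varrho)$ in the exponent is exactly the delicate point, and your heuristic does not deliver it.

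The paper resolves this by \emph{not} working on the full cluster $\kQ$. It first proves a genuinely nontrivial combinatorial lemma (Lemma~\ref{lpp}): the supercritical site-percolation cluster in a $d$-dimensional box of side $\ell$ contains an open \emph{path} of length $\asymp\ell^{d}$. This collapses the geometry to a one-dimensional chain of cliques $\kC([c_1 nR^{-d}],[c_1R^d])$ (Lemma~\ref{p2}), after which the block construction yields a $1$-dependent \emph{oriented percolation on $[0,\ell]$} with density $q\ge 1-T^{-\gamma}$. Durrett's results (Lemma~\ref{lc}) then give survival for time $(1-q)^{-c\ell}\ge T^{c\gamma\ell}=\exp(c\ell M\log(\lambda M))$, and the $\log(1/\varrho)$ factor comes out for free from the one-dimensional percolation estimate. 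So the missing ingredient in your argument is precisely Lemma~\ref{lpp} (or an equivalent device), and once you have it the natural coarse-graining is to two-dimensional oriented percolation rather than to a contact process on $\kQ$.

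For the exponential limit law your outline is in the right spirit, but the paper's actual proof (Lemma~\ref{cvel}) is considerably more delicate: it couples the primal and dual processes via Lemma~\ref{lc}~(iii), controls the number of ``lit'' sites in a central window of the one-dimensional chain, and needs the diameter estimate $d_n=\kO(n^{1/d}/R)$ for the largest component to verify $\ell_n/(d_n\vee\log n)\to\infty$. Your two-line sketch does not touch these points.
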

Part (i) implies that when  $R$ tends to infinity, the contact process survives a time super-exponential in $n$ regardless the value of $\lambda$. We usually say that in this case the critical value of the infection rate  is zero. On the other hand, recently in \cite{MS} M\'enard and Singh show that when $R$ is {\it fixed}, there is a non-trivial phase transition of the contact process on  {\it infinite} random geometric graphs (i.e.\ the vertices  are  atoms of a Poisson point process on the whole space $\mathbb{R}^d$). More precisely, they prove that there exists a constant $\lambda_c >0$, such that if $\lambda < \lambda_c$, the contact process dies out a.s.\ whereas if $\lambda > \lambda_c$, it survives forever with positive probability. Moreover, in \cite[Section 5.3.3]{Ct}, by slightly improving some details in the proof of M\'enard and Singh, we show that $\lambda_c(R) =\Theta( R^{-d})$. 

It has been observed in many examples that the contact process on  a sequence of finite graphs, say $(G_n)$, converging locally to some limiting graph, say $G$, exhibits a phase transition at the same critical value of infection rate as on the limit $G$: in the sub-critical regime, the contact process on $G$ dies out a.s.\ (resp.\ the  extinction time $\tau_n$ of the process on $G_n$ is of order $\log (|G_n|)$), whereas in the super-critical regime, the contact process survives forever with positive probability (resp.\ $\log \tau_n $ is of order $|G_n|$), see for instance \cite{C,CD,CMMV,MMVY,MV}.  

Our theorem \ref{trg} (i) implies that in the  case of random geometric graphs, this phase transition also  holds in a ''highly'' super-critical phase, i.e.\ it holds when $\lambda > C \lambda_c$, with $C$  a positive constant  independent of $R$.

\vspace{0.2cm} 
We now make some comments on the proof of Theorem \ref{trg}. The proof of (i) consists of two main steps. First,  we  find in $G(n,R,g)$ a {\it key  subgraph} composed of $\lceil cnR^{-d} \rceil$ adjacent complete graphs, each of size $\lfloor cR^d \rfloor$, see Lemma \ref{p2}. The proof of this part is based on the existence of long paths in super-critical site percolation in $\Z^d$.  Secondly,  we study the extinction time of the contact process on this key subgraph.  This part is based on a comparison between the contact process and a super-critical oriented percolation, see Section 4.  The proof of (ii) follows from a quite general argument: the extinction time of the contact process on a graph $G=(V,E)$ is at most $\exp(C|V| \log (|E|/|V|))$, for some positive constant $C$. 

\vspace{0.2 cm}
The paper is organized as follows. In Section 2, we prove some preliminary results on the contact process on  complete graphs and the oriented percolation in two dimensions. In Section 3, we prove the existence of a key subgraph mentioned above. In Section 4, we study the contact process on this key subgraph. In Section 5, we conclude the proof of Theorem \ref{trg} by using results in the previous sections.  In the last section, we study some extensions: the case $d=1$ and the equivalent model  considered in \cite{G}. 

\vspace{0.2cm}
We now fix some notation. We call size of a graph $G$ the cardinality of its set of vertices and we denote it by $|G|$.  For $\mu >0$, we denote by $\poi(\mu)$  a Poisson random variable with mean $\mu$ and $\kE(\mu)$ an exponential random variable with mean $1/\mu$. For  $x >0$, we denote by $\lf x \rf$  (resp.\ $\lceil x \rceil$) the greatest (resp.\ least) integer less (resp.\ greater) than or equal $x$.   If $f $ and $g$ are two real functions, we write $f= \mathcal{O}(g)$ if there exists a constant $C>0,$ such that $f(x) \leq C g(x)$ for all $x ;$  $f =\Theta(g) $ if $f= \mathcal{O}(g)$ and $g= \mathcal{O}(f);$  $f=o(g)$ if $g(x)/f(x) \rightarrow 0$ as $x \rightarrow \infty$. 
The term  w.h.p.\ means with probability tending to $1$.  
\section{Preliminairies}
\subsection{Contact process on  complete graphs} We denote by $K_m$ the complete graph of size   $m$. Similarly to the results for the contact process on star graphs in \cite{CD,MVY}, we prove the following.
\begin{lem} \label{l5} Assume that $\lambda \leq 1$ and $m \lambda \geq 640$. Then  the following assertions hold.
\begin{itemize}
\item[(i)] Let $(\xi_t)$ be the contact process on  $K_m$. Then
$$\pp \left(\inf_{T_m/2 \leq t \leq T_m}|\xi_t | \geq m/4 \Bigm| |\xi_0| \geq m/4 \right) \geq 1-2T_m^{-1},$$
with $T_m= \exp(m \log(\lambda m)/16)$.
\item[(ii)] Let $K^1_m$ and $K^2_m$ be two disjoint complete graphs of size $m$, and $K_{m\text{-}m}$  be the graph formed by adding an edge between these two graphs. Let $(\xi_t)$ be the contact process on  $K_{m\text{-}m}$. Then
$$\pp \left(|\xi_{T_m} \cap K^2_m| \geq m/4 \, \Big | \, |\xi_0 \cap K^1_m| \geq m/4 \right) \geq 1-5T_m^{-1}.$$
\end{itemize}
\end{lem}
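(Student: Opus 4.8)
The plan is to treat the contact process on a complete graph as a birth-and-death-type chain on the number of infected sites, and to use a super-martingale / drift estimate to show that, once $|\xi_t|$ is of order $m$, it stays there for a time exponential in $m\log(\lambda m)$. For part (i), set $N_t = |\xi_t|$. When $N_t = k$, healings occur at rate $k$ and infections at rate $\lambda k(m-k)$. Thus as long as $k \leq m/2$ the infection rate dominates the healing rate by a factor $\lambda(m-k) \geq \lambda m/2 \geq 320$, so the chain has a strong upward drift away from small values. The key step is to bound the probability that $N_t$ ever drops from $\approx m/2$ down to $m/4$: I would track the embedded jump chain, which from a state $k \in [m/4, m/2]$ moves down with probability $\frac{k}{k + \lambda k (m-k)} = \frac{1}{1+\lambda(m-k)} \leq \frac{1}{1+\lambda m /2} \leq \frac{2}{\lambda m}$ and up otherwise. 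Comparing with a biased random walk (or using the function $f(k) = (\lambda m/2)^{-k}$ as a super-harmonic function for the relevant range), the probability of descending a distance $m/4$ before returning upward is at most something like $(2/\lambda m)^{m/4}$, i.e. $\exp(-\Omega(m\log(\lambda m)))$. Summing the expected number of excursions up to time $T_m$ (each excursion taking at least a constant expected time, and there are at most $O(T_m \cdot m)$ jumps), a union bound gives that $\inf_{t\le T_m} N_t \geq m/4$ fails with probability $O(T_m^{-1})$, provided the constant $16$ in the exponent of $T_m$ is chosen with enough room; one also needs $N_t$ not to be absorbed at $0$, which is a special case of the same estimate. Restricting attention to $t \in [T_m/2, T_m]$ only helps.

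For part (ii), the strategy is to first run part (i) on $K^1_m$ in isolation: with probability $1 - O(T_m^{-1})$ we have $|\xi_t \cap K^1_m| \geq m/4$ throughout $[0, T_m]$ (the extra edge to $K^2_m$ only adds infections, so monotonicity of the contact process lets us dominate from below by the process that ignores that edge). Next, I would show that the single connecting edge succeeds in transmitting the infection into $K^2_m$ within a time that is negligible compared to $T_m$: as long as the endpoint $v_1 \in K^1_m$ of the bridge is infected a positive fraction of the time — which follows from $|\xi_t\cap K^1_m|\ge m/4$ and a second-moment or occupation-time argument, or more simply from restarting — the bridge fires at rate $\lambda$, so within time, say, $T_m^{1/2}$ it transmits to $v_2 \in K^2_m$ with probability $1 - O(T_m^{-1})$. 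Once a single vertex of $K^2_m$ is infected, I would invoke a "growth from one infected vertex" estimate on $K_m$ (the same birth-death chain started at $k=1$): the upward drift is $\lambda(m-1) \geq 320$, so the process reaches level $m/4$ before dying with probability $1 - O(1/(\lambda m)) = 1 - o(1)$, and does so within time $O(\log m)$ w.h.p.; combined with part (i) applied on $K^2_m$, the population in $K^2_m$ then stays above $m/4$ until time $T_m$. Collecting the $O(1)$ bad events, each of probability $O(T_m^{-1})$, and taking into account a constant number of restart attempts to boost the "growth from one vertex" step to probability $1 - O(T_m^{-1})$, yields the bound $1 - 5T_m^{-1}$.

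The main obstacle I expect is the quantitative excursion estimate in part (i): getting the constant in the exponent right so that the failure probability is genuinely $O(T_m^{-1})$ rather than merely $o(1)$. This requires being careful that (a) the number of jumps of the chain up to time $T_m$ is not too large — one needs a crude a priori bound like $O(m\, T_m)$ jumps with overwhelming probability, since each jump rate is at most $\lambda m^2$ — and (b) the per-excursion descent probability $(2/\lambda m)^{m/4}$ beats the number of excursions with a comfortable margin; this is where the hypothesis $m\lambda \geq 640$ and the specific constant $16$ enter, and one must check the arithmetic $\frac{m}{4}\log(\lambda m/2) \gg \frac{m}{16}\log(\lambda m) + \log(m T_m)$. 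The other steps — monotonicity to drop the bridge edge, the "growth from one vertex" lemma, and the bridge-transmission time — are comparatively routine applications of standard coupling and first-passage arguments, and I would state the occupation-time fact (that $N_t\ge m/4$ on a time interval forces the bridge endpoint to be infected on a set of times of comparable measure) via a simple renewal/PASTA-type observation rather than a delicate computation.
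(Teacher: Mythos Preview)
Your approach for part (i) is essentially the paper's: birth--death comparison, biased-walk/gambler's-ruin bound of order $(c/\lambda m)^{m/4}$ for a descent of length $m/4$, and then a count of excursions against the time horizon. One correction, though: the restriction to $t\in[T_m/2,T_m]$ is not a convenience that ``only helps.'' The initial condition is $|\xi_0|\ge m/4$, not $m/2$, and your descent estimate is for dropping from level $\approx m/2$ to $m/4$. The paper explicitly splits into two claims: first show that from $m/4$ one reaches $m/2$ before time $T_m/2$ with probability $1-T_m^{-1}$ (a drift/optional-stopping estimate giving $\E[\sigma]=O(1)$), and then from $m/2$ one stays above $m/4$ for time $T_m$. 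The interval $[T_m/2,T_m]$ in the statement is precisely the time allotted for this boost step; without it your excursion argument has nowhere to start.

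For part (ii) your outline is close but the bridge step is where you diverge from the paper, and it is the genuinely delicate point. Knowing $|\xi_t\cap K_m^1|\ge m/4$ on an interval does \emph{not} directly give that the fixed bridge endpoint $v_1$ is infected a positive fraction of that interval with probability $1-O(T_m^{-1})$; exchangeability only gives this in expectation, and the concentration you would need is not a one-line PASTA remark. The paper bypasses this entirely: it proves that on any interval of length $m/2$ during which $K_m^1$ is nonempty, the event $\{\exists\,t:\ |\xi_t\cap K_m^2|\ge m/4\}$ has probability at least $e^{-m/4}/m$ (a crude bound obtained by forcing a specific chain of infections $v_i\to v\to w$ and then $m/4$ consecutive up-jumps in $K_m^2$). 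It then chops $[0,T_m/2]$ into $\sim T_m/m$ such intervals and takes the product $(1-e^{-m/4}/m)^{T_m/m}\le T_m^{-1}$. This is exactly your ``restarting'' alternative carried out, and it is much cleaner than the occupation-time route; I would drop the latter and go straight to the interval-chopping argument. After that, your growth-from-one-vertex step and the final application of (i) on $K_m^2$ match the paper.
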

\begin{proof} Part (i)  follows from the following claims 
 \begin{align}
 \pp \left( \inf\limits_{0 \leq t\leq T_m}|\xi_t| \geq  m/4 \Bigm| |\xi_0| \geq  m/2 \right) & \geq 1 - T_m^{-1}, \label{lg1}\\
   \pp \Big( \exists \,t \leq T_m/2 :|\xi_t| \geq  m/2 \Bigm| |\xi_0| \geq m/4 \Big) & \geq 1 - T_m^{-1}. \label{lg2}
    \end{align}
First, we observe that  $|\xi_t| $  increases by $1$ with rate  $\lambda  |\xi_t| (m -|\xi_t|)$ and decreases by $1$ with rate $|\xi_t|$. Therefore, the skeleton of $(|\xi_t|)$ is a random walk $(U_r)$ trapped at $0$, which satisfies $U_0= |\xi_0|$ and
\begin{align*}
U_{r+1} = U_r +1 & \quad \textrm{ with probability } \, p_1= \frac{\lambda(m-U_r)}{\lambda(m-U_r)+1 }, \\
U_{r+1} = U_r +1 & \quad \textrm{ with probability } \, 1-p_1.
\end{align*}
We now prove \eqref{lg1}. Assume that $|\xi_0| \geq m/2$. Then $U_0 \geq m/2$. Moreover,  if  $U_r \in ( m/4, 3m/4)$ then $p_1 \geq \lambda m/( \lambda m+ 4)$. Hence, when $U_r \in ( m/4, 3m/4)$, it stochastically dominates a   random walk $(X_r)$ satisfying  $X_0 =m/2$ and 
\begin{align*}
X_{r+1} = X_r +1 & \quad \textrm{ with probability } \, \frac{ \lambda m}{\lambda m +4 }, \\
X_{r+1} = X_r -1 & \quad \textrm{ with probability } \, \frac{4}{\lambda m +4 }.
\end{align*}
 Then $\theta^{X_r}$ is a martingale, where 
$$\theta= \frac{4 }{ \lambda m}.$$
Let $q$ be the probability that $X_r$ goes below $ m/4 $ before hitting $3m/4$. It follows from the optional stopping theorem that 
$$q \theta^{ m/4 }+(1-q) \theta^{3 m/4} \leq \theta^{ m/ 2}. $$
Therefore using $\lambda m \geq 640$, we get
\begin{align} \label{eqq}
q \leq \theta^{m/4} = (4/ \lambda m)^{m/4} \leq  T_m^{-3}/(2m^2).
\end{align} 
Hence,  the random walk $(X_r)$ (and thus  $(|\xi_t|)$) makes  at least $ \lf m^2 T_m \rf $ upcrossings between  $ m/2$ and $3 m/ 4$ before hitting $ m/4 $ with probability larger than 
\begin{align} \label{eb1}
1 - \lf m^2 T_m \rf T_m^{-3}/(2m^2) \geq 1- T_m^{-1}/2.
\end{align} 
The law of the waiting time between two upcrossings of $(|\xi_t|)$ stochastically dominates  $\mathcal{E}(L)$,  with  $L=  \lambda \lf m/2 \rf (m- \lf m/2 \rf)+ \lf m/2 \rf $, the waiting time when $  |\xi_t|= \lf m/2 \rf$. 

Suppose that $(|\xi_t|)$ makes  more than $ \lf m^2T_m \rf $ consecutive upcrossings. Then the time that $(|\xi_t|)$ stays above $m/4$  stochastically dominates  $S$, the sum of $ \lf m^2 T_m \rf$ i.i.d.\ exponential random variables  with mean $1/L$. By applying Chebyshev's inequality, we get 
\begin{align} \label{eb2}
\mathbb{P}(S < \lf m^2T_m \rf/2L) \leq 4/(\lf m^2 T_m \rf) \leq T_m^{-1}/2.
\end{align}
Since $L \leq m^2/2$, we deduce \eqref{lg1} from \eqref{eb1} and \eqref{eb2}.

We now  prove \eqref{lg2}. Assume that $|\xi_0| \geq m/4$. Using a similar argument as for $(X_r)$, we get that  when $U_r \in (m/8,m/2)$, it stochastically dominates a  random walk $(Y_r)$ satisfying   $Y_0=  m/4$ and
\begin{align*}
Y_{r+1} & =  Y_r+1 \textrm{ with probability } \, p_2= \frac{ \lambda m}{ \lambda m +2} , \\
Y_ {r+1} & =Y_r-1 \textrm{ with probability } \, 1-p_2.
\end{align*}
Let us define $$\sigma_Y = \inf \{r: Y_r \geq  m/2\} \qquad \textrm{and} \qquad \tilde{\sigma}_Y = \inf \{r: Y_r \leq  m/8\}. $$
Then similarly to \eqref{eqq}, we have
\begin{align} \label{s1}
  \mathbb{P}(\tilde{\sigma}_Y < \sigma_Y)  \leq (2/\lambda m)^{m/8}\leq T_m^{-1}/3.
\end{align} 
Since $Y_r - (2p_2-1 )r$ is a martingale, it follows from the  optional stopping  theorem that
$$m/4= \mathbb{E}(Y_{\sigma_Y \wedge r}) - (2p_2-1 ) \mathbb{E}(\sigma_Y \wedge r)  \leq  m/2 - (2p_2-1 ) \mathbb{E}(\sigma_Y \wedge r) .$$
Therefore using $m \lambda \geq 640$, we get
$$ \E(\sigma_Y \wedge r)  \leq  \frac{m}{4(2p_2 -1)} \leq m/3.
$$
Letting $t$ go to infinity, we obtain
\begin{align*} 
\E(\sigma_Y) \leq m/3.
\end{align*} 
Thus using Markov inequality, we have
\begin{align} \label{s2}
\pp(\sigma_Y \geq mT_m) \leq \E(\sigma_Y)/mT_m \leq T_m^{-1}/3.
\end{align} 
Now, let us define 
$$\sigma = \inf \{t: |\xi_t| \geq  m/2\} \qquad \textrm{and} \qquad \tilde{\sigma}= \inf \{t: |\xi_t| \leq  m/8\}. $$
Then by  \eqref{s1}, 
\begin{align} \label{sg1}
\pp(\tilde{\sigma} < \sigma) \leq \mathbb{P}(\tilde{\sigma}_Y < \sigma_Y) \leq T_m^{-1}/3.
\end{align}
On the other hand, when $|\xi_t| \in (m/8,m/2)$  the waiting time at each stage  is an exponential random variable with mean less than $1/M$, with $M= \lambda \lf m/8 \rf (m- \lf m/8 \rf) + \lf m/8 \rf$, the mean of the waiting time when $|\xi_t|= \lf m/8 \rf$. Therefore
\begin{align*}
\sigma 1(\sigma < \tilde{\sigma}) \preceq \sum_{i=1}^{\sigma_Y} E_i,
\end{align*}
where $(E_i)$ is a sequence of i.i.d.\ exponential random variables with mean $1/M$ and  independent of $\sigma_Y$.  Hence
\begin{align} \label{sg2}
\pp(T_m/2  \leq \sigma < \tilde{\sigma}) & \leq \pp(\sigma_Y \geq mT_m) + \pp \left( \sum_{i=1}^{ \lf mT_m \rf} E_i\geq T_m/2 \right) \notag \\
& \leq 2T_m^{-1}/3.
\end{align}
Here, we have used \eqref{s2} to bound the first term, and for the second one we note that
 $$\en(E_i)=1/M \leq 64/( 7\lambda m^2) \leq 1/(70m),$$
 thus using a standard large deviation result, we get a bound for this term. Now, it follows from  \eqref{sg1} and \eqref{sg2} that 
\begin{align*}
  \pp \left(\sigma \geq  T_m/2 \right) \leq  T_m^{-1},
    \end{align*}
which proves \eqref{lg2}.

For (ii), let $u$ and $v$ be two vertices in $K^1_m$ and $K^2_m$ respectively, such that there is an edge between $u$ and $v$. Let $(\xi'_t)$ (resp.\ $(\xi''_t)$) be the contact process on $K^1_m$ (resp.\ $K^2_m$). By (i), we have
\begin{align} \label{cc1}
\pp \left(\xi'_{T_m} \neq \varnothing \, \Big| \, |\xi'_0| \geq m/4 \right) \geq 1- 2 T_m^{-1}. 
\end{align} 
We now claim that 
\begin{align} \label{cl3}
\pp\left(\exists \, t \leq m^2/2 : |\xi''_t| \geq m/4  \, \Big | \, \xi'_{m^2/2} \neq \varnothing \right) \geq e^{-m/4}.
\end{align}
To prove \eqref{cl3}, it amounts to show that 
\begin{align} \label{cl1}
\pp \left(\exists \, t \leq m/2 : |\xi''_t| \geq m/4  \, \Big | \,  |\xi''_0|=1 \right) \geq 2 e^{-m/4},
\end{align}
and 
\begin{align} \label{cl2}
\pp \left(\textrm{$v$ gets infected before } m^2/4 \, \Big | \, \xi'_{m^2/4} \neq \varnothing \right) \geq 1/2.
\end{align}
For \eqref{cl1}, observe that  when $|\xi''_t| \leq m/4$,  it increases by $1$ in the next stage with probability  
$$\frac{\lambda (m-|\xi''_t|)}{\lambda (m-|\xi''_t|)+ 1  } \geq \frac{3 \lambda m}{3 \lambda m+4} > 0.9 ,$$
as $\lambda m \geq 640$. Moreover,  the waiting time to the next stage is an exponential random variable with mean less than $1$. Therefore, the probability that in all the $ \lceil m/4 \rceil$ first stages, $|\xi''_t|$ increases and the waiting time  is less than $1$, is   larger than 
$$\left( 0.9(1- e^{-1}) \right)^{ \lceil m/4 \rceil} \geq 2 e^{-m/4},$$
which implies \eqref{cl1}. For \eqref{cl2}, we note that 
\begin{align*}
\{\xi'_{m^2/4}\neq \varnothing\} \subset \bigcap_{i=0}^{\lf m^2/8 \rf -1} \kE_i,
\end{align*}
 where
\begin{align*}
\kE_i=\{\exists\, v_i \in K^1_m: \xi'_{2i}(v_i)=1\}.
\end{align*}
We define
\begin{align*}
\kI_i =  \kE_i \cap  \{& \textrm{there is no recovery at $u_i$ in $[2i,2i+1]$ and there is an infection spread from }\\
&  \textrm{ $u_i$ to $u$ in $[2i,2i+1]$, there is no recovery at $u$ in $[2i,2i+2]$ and there is an  } \\ 
& \textrm{ infection spread from $u$ to $v$ in } [2i+1,2i+2] \}.
\end{align*}
If $u_i \equiv u$, we only consider the recovery in $u$ and the infection spread from $u$ to $v$. 
We see that if one of $(\kI_i)$ occurs then $v$ gets infected before $m^2/4$ and for any $i=0, \ldots, \lf m^2/8 \rf -1$
\begin{align} \label{kii}
\pp \left(\kI_i \, \big | \,  \cap_{j=0}^{i}\kE_j \right) \geq e^{-3}(1-e^{-\lambda})^2 \geq \lambda^2/(4e^3),
\end{align}
as $\lambda \leq 1$. Therefore, by using induction we have 
\begin{align} \label{cpp}
&\pp \left(\textrm{$v$ is not  infected before $m^2/4$}   \right) \notag \\
& \leq \pp \left( \left( \bigcup_{i=0}^{\lf m^2/8 \rf -1} \kI_i \right)^c\cap \left(\bigcap_{i=0}^{\lf m^2/8 \rf -1} \kE_i\right)\right)\notag\\
 &\leq (1-\lambda^2/(4e^3))^{\lf m^2/8 \rf } \notag \\ 
 &\leq 1/2,
\end{align}
since $\lambda m \geq 640$. Thus \eqref{cl2} follows. 

We now prove (ii) by using \eqref{cl3}. Suppose that $\xi'_{T_m} \neq \varnothing$. We divide the time interval $[0,T_m/2]$ into $ \lf T_m/m^2 \rf $ small intervals of length $m^2/2$. Then  \eqref{cl3} implies that in each interval with probability larger than $e^{-m/4}$, there is a time $s$, such that that $|\xi''_s| \geq m/4$. Hence, similarly to \eqref{cpp} we have  
\begin{align} \label{cc2}
\pp \left( \exists \, s \leq T_m/2 : |\xi''_s| \geq m/4 \Bigm | \xi'_{T_m/2} \neq \varnothing \right) \geq 1- (1-e^{-m/4})^{\lf T_m/m^2 \rf} \geq 1-  T_m^{-1}.
\end{align}
Suppose that $|\xi''_s| \geq m/4$ with $s \leq T_m/2$. Then  (i) implies that  $|\xi''_{T_m}| \geq m/4$ with probability larger than $1-2T_m^{-1}$.  Combining this with \eqref{cc1} and \eqref{cc2}, we get (ii).
\end{proof}
\subsection{Oriented percolation on finite sets} 
For any positive integer $\ell$, we consider an oriented percolation process on $[0,  \ell] $ with parameter $q$ defined  as follows. Let 
$$\Gamma = \{(i,k)  \in [ 0,  \ell] \times \mathbb{N} : i+k \textrm{ is even}\}.$$
For each pair of sites $(i,k)$ and $(j,k+1)$ with $j=i\pm 1$, we draw an arrow from $(i,k)$ to $(j,k+1)$ with probability $p$, all these events being independent. Given the initial configuration $A \subset [0, \ell]$, the oriented percolation  $(\eta_t)_{t \geq 0}$  is defined by
  $$\eta_t^A = \{i \in [0, \ell]  : \exists \, j \in A \textrm{ s.t.} \, \,  (j,0) \rightarrow (i,t) \} \textrm{ for } t \in \mathbb{N},$$
  where the notation $(j,0) \rightarrow (i,t)$ means that there is an oriented path from $(j,0)$ to $(i,t)$. If $A=\{x\}$, we simply write $(\eta_t^x)$. We call $(\eta_t)$ a {\it Bernoulli oriented percolation with parameter $q$}.
  
The oriented percolation on $\mathbb{Z} $, denoted  by $(\bar{\eta}_t)$, was investigated  by Durrett in \cite{D}. Using his results and techniques, we will prove the following.
   \begin{lem} \label{lc}
Let $(\eta_t)$ be the oriented percolation on $[ 0, \ell]$ with  parameter $q$.  Then there exist  positive constants $\varepsilon$ and  $c$, independent of $q$ and $\ell$, such that if $q \geq 1- \varepsilon$ then the following statements hold.
\begin{itemize}
\item[(i)] For any $\ell$, and $x \in [0, \ell]$
\begin{align*}
\pp \big ( \, \exists \, r, s \leq 2 \ell, \textrm{ s.t. } \eta^{x}_r (0)=1, \eta^{x}_s (\ell)=1 \big ) \geq c.
\end{align*}
\item[(ii)] For any $\ell$,
$$\pn ( \eta^{{\bf 1}}_{t_{\ell}} \neq \varnothing ) \geq 1 - 1/ t_{\ell},$$
  where 
  $t_{\ell}= \lf (1-q)^{-c\ell} \rf$ and $(\eta^{{\bf 1}}_t)$ is the oriented percolation starting with $\eta_0^{{\bf 1}}= [0, \ell]$.\\
  \item[(iii)] There exist a positive constant $\beta \in (0,1)$ and an integer  $s_{\ell}\in [\exp( c\ell), 2\exp(c \ell)] $, such that
$$\pn \Big( \big |\eta^{{\bf 1}}_{s_{\ell}} \cap [(1-\beta) \ell/2,(1 + \beta) \ell/2 ] \big |   \geq 3\beta \ell/4 \Big) \geq 1 - \exp(-c \ell).$$
\end{itemize}
  \end{lem}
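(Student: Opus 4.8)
I would derive all three parts of Lemma~\ref{lc} from the theory of supercritical oriented percolation of Durrett~\cite{D}, via a renormalization argument. The first step is to fix $\varepsilon>0$ so small that, for every $q\ge 1-\varepsilon$, the percolation is ``very supercritical'': the process on $\Z$ started from a single site survives with probability at least a constant $\rho_0>0$; on survival its right edge $r_t$ satisfies $r_t/t\to\alpha(q)$ with $\alpha(q)>3/4$; and Durrett's block construction couples the percolation on the segment with a $1$-dependent oriented site percolation on a renormalized segment whose parameter is $1-\delta$, with $\delta=\delta(q)\le C(1-q)^{c_0}$. Every error term below will be a power of $\delta$, hence a power of $1-q$, which is why the time scales are $(1-q)^{-c\ell}$ and $\exp(c\ell)$.

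For $(i)$ I would first observe that, until site $\ell$ is first occupied, the percolation from $0$ uses no site $>\ell$ (the first site $\ge\ell$ to become occupied is $\ell$ itself, reached from $\ell-1$), so $\sigma_\ell$ has the same law on $[0,\ell]$ and on $[0,\infty)$. Since $r_\cdot$ increases by at most $1$ per step, on the event $\{r_{2\ell}\ge\ell\}$ the site $\ell$ is occupied by time $2\ell$, i.e. $\sigma_\ell\le 2\ell$; and as the half-line process from $0$ survives with probability $\ge\rho_0$ and, on survival, has right edge advancing at speed $\alpha(q)>1/2$, this event has probability $\ge\rho_0-o(1)\ge\rho_0/2$ once $\ell$ is large. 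For $\ell$ below a fixed threshold one instead uses the straight diagonal path, open with probability $q^\ell$, bounded below by a constant. Taking $c$ to be the smaller constant proves $(i)$.

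For $(ii)$ I would work with the renormalized $1$-dependent oriented site percolation on a segment of $\Theta(\ell)$ vertices with parameter $1-\delta$, calling a configuration \emph{good} when its occupied set is large and well spread out (say, dominates a fixed positive fraction of the renormalized segment). The full configuration is good, and the core of the proof is the one-window estimate, which I expect to be the main obstacle: \emph{from any good configuration, after $L=\Theta(\ell)$ steps the configuration is good again, except with probability at most $\delta^{c_1\ell}$}. One has to show that goodness is destroyed only through a large-deviation event — a bottleneck column, or a collapse of the occupied region towards one endpoint — which costs $\delta^{\Omega(\ell)}$, and this is where Durrett's exponential edge and density estimates enter. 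Iterating over the $\lceil t_\ell/L\rceil$ disjoint windows contained in $[0,t_\ell]$ together with a union bound gives
\[
\pn\!\left(\eta^{\mathbf{1}}_{t_\ell}=\varnothing\right)\ \le\ \left(\tfrac{t_\ell}{L}+1\right)\delta^{c_1\ell}\ \le\ \frac{1}{t_\ell},
\]
provided the constant $c$ defining $t_\ell=[(1-q)^{-c\ell}]$ is small relative to $c_0c_1$; transferring back to $(\eta_t)$ gives $(ii)$.

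For $(iii)$ I would bootstrap the argument of $(ii)$. Fix $M=\Theta(\ell)$ and pick an integer $s_\ell\in[\exp(c\ell),2\exp(c\ell)]$ of the parity for which the sites near $\ell/2$ are occupiable; note $s_\ell\ll t_\ell$. The one-window estimate from $(ii)$, applied along the windows in $[0,s_\ell-M]$, shows that the configuration at time $s_\ell-M$ is good except with probability at most $s_\ell\delta^{c_1\ell}$, which is $\le\exp(-c_2\ell)$ once $\varepsilon$ is small enough that the power of $1-q$ beats $\exp(c\ell)$. Conditionally on a good configuration at time $s_\ell-M$, its $\Theta(\ell)$ occupied sites spawn sub-clusters, and by attractiveness $\eta^{\mathbf{1}}_{s_\ell}$ dominates the percolation run for $M$ steps from those sites; since $\alpha(q)>3/4$ and $\beta$ is small, in $M$ steps that cluster $(a)$ does not die — it would have to kill every sub-cluster, which has probability $\le(1-\rho_0)^{\Omega(\ell)}$ — and $(b)$ refills the window $[(1-\beta)\ell/2,(1+\beta)\ell/2]$ to a density bounded below by a constant, so that it contains at least $3\beta\ell/4$ occupied sites there except with probability $\le\exp(-c_3\ell)$, by a large-deviation bound on the density (itself a consequence of Durrett's block estimates). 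Adding the three exponentially small errors and transferring back from the renormalized percolation gives $(iii)$. The remaining difficulty is the bookkeeping: $\beta$, $M$, $c$ and the various exponents must be fixed in the right order so that every error term is genuinely $\exp(-\Omega(\ell))$ uniformly over $q\ge 1-\varepsilon$.
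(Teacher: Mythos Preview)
Your proposal is workable, but it takes a genuinely different route from the paper, and a heavier one.

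For (i) you and the paper agree in substance: the paper simply cites Theorem~B.24(a) in Liggett~\cite{L}, and your right-edge argument is essentially that result.

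For (ii) the paper does not renormalize at all; it points to Durrett's \emph{contour argument} in \cite[Section~10]{D}: if $\eta^{\bf 1}$ dies on $[0,\ell]$ there is a dual closed contour of length $\ge 2\ell$, and summing over contours gives $\pn(\eta^{\bf 1}_t=\varnothing)\le t\cdot C(1-q)^{c\ell}$ directly. Your block-construction scheme would eventually reproduce the same exponent, but the ``one-window estimate'' you flag as the main obstacle is itself a Peierls/large-deviation statement, so passing through a $1$-dependent renormalized model adds a layer without avoiding any of the work.

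For (iii) the paper again avoids renormalization. It fixes $\ell_1=[(2-\alpha)\ell/4]$, $\ell_2=[(2+\alpha)\ell/4]$, $\ell_3=[\ell/4]$, observes that the process started inside $[\ell_1,\ell_2]$ and run for $\ell_3$ steps cannot reach the boundary and therefore coincides with the process on $\Z$, and then proves the single iteration: from any $A\subset[\ell_1,\ell_2]$ with $|A|\ge 3\alpha\ell/8$, one has $|\eta^A_{\ell_3}\cap[\ell_1,\ell_2]|\ge 3\alpha\ell/8$ except with probability $\le e^{-c\ell}$. This is assembled from three ready-made facts: Durrett's large-deviation bound on the edge speed (so that $[l^x_{\ell_3},r^x_{\ell_3}]\supset[\ell_1,\ell_2]$ on survival), the Durrett--Schonmann density estimate~\cite{DS} for $\eta^{\bf 1}$, and Durrett's ``more spread out'' coupling together with part~(ii) to guarantee survival from $A$. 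Iterating $\Theta(\exp(c\ell)/\ell)$ times yields the claim with $\beta=\alpha/2$.

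The trade-off, then: the paper's argument is short because it invokes three specific off-the-shelf estimates for supercritical oriented percolation on $\Z$; your approach is more self-contained in spirit but must redevelop those same estimates inside the renormalized $1$-dependent model, and the boundary/parity bookkeeping you mention at the end is exactly what the paper sidesteps by working in the central window $[\ell_1,\ell_2]$ for short time increments.
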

\begin{proof}
Part (i) is similar to Theorem B.24 (a) in \cite{L} and (ii) can be proved using a contour argument as  in \cite[Section 10]{D}.

We now prove (iii). Let  $(\bar{\eta}_t)$ be the oriented percolation on $\mathbb{Z}$. Then   
\begin{align*}
\alpha= \pp(\bar{\eta_t}^{0} \neq \varnothing\, \forall \, t) \rightarrow 1 \quad \textrm{ as } \quad q \rightarrow 1.  
\end{align*}
Hence we can assume that $\alpha >3/4$. Now we define 
\begin{align*}
\ell_1= \lf (8- \alpha )\ell/16 \rf, \quad \quad \ell_2 = \lf (8+ \alpha )\ell/16 \rf \quad \textrm{ and } \quad \ell_3 = \lf \ell/4 \rf.
\end{align*}
We claim that there exists a positive constant $c$, such that for any $A \subset[\ell_1, \ell_2]$ with $|A| \geq 3 \alpha \ell/32$, 
\begin{align} \label{eal}
\pp \Big( \big |\eta^A_{\ell_3} \cap [\ell_1, \ell_2] \big | \geq 3 \alpha\ell/32 \Big) \geq 1 - \exp(-c \ell).
\end{align}
Suppose that $\eqref{eal}$ holds for a moment, we now prove (iii). Let  $A_0$ be an arbitrary subset of $[\ell_1, \ell_2]$ satisfying $|A_0| \geq 3 \alpha\ell/32$. Then we define
\begin{eqnarray*}
\kM_1= \big \{ \big |\eta^{A_0}_{\ell_3} \cap [\ell_1, \ell_2] \big | \geq 3 \alpha\ell/32 \big \}.
\end{eqnarray*}
It follows from \eqref{eal} that
\begin{equation} \label{kmm}
\pp(\kM_1) \geq 1- \exp(-c\ell).
\end{equation}
Moreover, if  $\kM_1$ happens,  $\eta^{A_0}_{\ell_3} \cap [\ell_1, \ell_2]$ contains a subset $A_1$  whose cardinality is  larger than $3 \alpha\ell/32$. Thus we can define 
\begin{eqnarray*}
\kM_2= \kM_1 \cap \big \{ \big |\eta^{A_1, \ell_3}_{2\ell_3} \cap [\ell_1, \ell_2] \big | \geq 3 \alpha\ell/32 \big \},
\end{eqnarray*}
where for all $0\leq s \leq t$ and $A \subset [0,\ell]$,
\[\eta^{A,s}_t= \big \{i \in [0, \ell]: \exists \, j \in A \, \textrm{ s.t. } \, (j,s) \rightarrow (i,t) \big \}.\]
Similarly, for all  $k\geq 2$ we define
\begin{eqnarray}
\kM_{k+1}= \kM_k \cap \big \{ \big |\eta^{A_k, k\ell_3}_{(k+1)\ell_3} \cap [\ell_1, \ell_2] \big | \geq 3 \alpha\ell/32 \big \},
\end{eqnarray}
where $A_k$ is a subset of $\eta^{A_{k-1}, (k-1)\ell_3}_{k\ell_3} \cap [\ell_1, \ell_2]$ satisfying $|A_k| \geq 3 \alpha \ell /32$. 

By \eqref{eal}, we have for all $k\geq 1$
\begin{equation*}
\pp \left(\kM_{k+1} \, \big | \, \kM_k \right) \geq 1 - \exp(-c \ell),
\end{equation*}
or equivalently 
\begin{equation} \label{kmk}
\frac{\pp(\kM_{k+1})}{\pp(\kM_k)} \geq 1 - e^{-c \ell}.
\end{equation}
Using \eqref{kmm} and \eqref{kmk}, we obtain that for $k_{\ell}=  \lf e^{c \ell /2}\rf$,
\begin{equation} \label{kmkl}
\pp(\kM_{k_{\ell}}) \geq  \left( 1- e^{-c \ell}\right)^{k_{\ell}} \geq 1 - 1/ k_{\ell}.
\end{equation}
We have 
\begin{equation} \label{kkl}
\kM_{k_{ \ell}} \subset \Big \{\big| \eta_{s_{\ell}}^{{\bf 1}} \cap [\ell_1, \ell_2] \big | \geq 3 \alpha \ell/32 \Big \},
\end{equation}
where $s_{\ell}= \ell_3 \times k_{\ell}$.  

On the other hand, if $\beta= \alpha/8$, then  $\ell_1 = \lf(1- \beta)\ell/2\rf $ and $\ell_2 = \lf (1+\beta) \ell/2 \rf$. Hence using \eqref{kmkl} and \eqref{kkl}, we get
$$\pn \Big( \big|\eta^{{\bf 1}}_{s_{\ell}} \cap [(1-\beta) \ell/2,(1 + \beta) \ell/2 ] \big| \geq 3\beta \ell/4 \Big ) \geq 1 - \exp(-c \ell/2),$$
which implies that (iii) holds with $\beta = \alpha/8$.

Now it remains to prove \eqref{eal}. First, we observe that for all $t \leq \ell_3$,
\begin{align*}
\bar{\eta}^{[\ell_1, \ell_2]}_{t} \subset [\ell_1 - t, \ell_2+ t] \subset [\ell_1 - \ell_3, \ell_2+ \ell_3] \subset [0, \ell],
\end{align*}
where $(\bar{\eta}_t)$ is the oriented percolation on $\Z$. Therefore, for any $A \subset [\ell_1, \ell_2]$
\begin{align*}
\left(\bar{\eta}^{A }_{t} \right)_{0 \leq t \leq \ell_3} \equiv  \left(\eta^{A}_{t} \right)_{0 \leq t \leq \ell_3}.
\end{align*}
Hence, to simplify notation, we use $(\eta_t)$ for the both processes in the interval $[0, \ell_3]$.  To prove \eqref{eal}, it suffices to show that there exists a positive constant $c$, such that
\begin{align}
\pp \Big(\big|\eta^x_{\ell_3} \cap [\ell_1, \ell_2]\big| \geq 3 \alpha \ell/32 \, \big| \, \eta^x_{\ell_3}  \neq \varnothing \Big)& \geq 1 -\exp(-c \ell) \textrm{ for all } x \in [\ell_1, \ell_2], \label{eal1}
\end{align}
\begin{align}
\pp \big(\eta^A_{\ell_3}  \neq \varnothing \big) & \geq 1 -\exp(-c \ell) \textrm{ for all } A \subset [\ell_1, \ell_2] \textrm{ with } |A| \geq 3\alpha \ell/32. \label{eal2}
\end{align}
To prove \eqref{eal1}, we define for any $A \subset \mathbb{Z}$ and $t \geq 0$
\begin{align*}
r^A_t: = \sup \{x: \exists y \in A, (y,0) \rightarrow (x,t)\} \\
l^A_t: = \inf \{x: \exists y \in A, (y,0) \rightarrow (x,t)\}.
\end{align*}
Then \eqref{eal1} is a consequence of the following claims. 
\begin{itemize}
\item[(a)] If $[\ell_1, \ell_2] \subset [l^x_{\ell_3}, r^x_{\ell_3}]$, then
$$\eta^{{\bf 1}}_{\ell_3} \cap [\ell_1, \ell_2] \equiv \eta^{x}_{\ell_3} \cap [\ell_1, \ell_2]. $$
\item[(b)] 
$\pp \Big( \big |\eta^{{\bf 1}}_{\ell_3} \cap [\ell_1, \ell_2] \big | \geq 3(\ell_2- \ell_1)/4 \Big) \geq 1 - \exp(-c \ell)$, as $3/4 <\alpha$.  \\
\item[(c)]
$\pp \left(  [l^x_{\ell_3}, r^x_{\ell_3}] \supset [\ell_1, \ell_2]  \Bigm| \eta^x_{\ell_3} \neq \varnothing \right) \geq 1 - \exp(-c \ell).$\\
\end{itemize}
We start with the claim (a). Suppose that $[\ell_1, \ell_2] \subset [l^x_{\ell_3}, r^x_{\ell_3}]$. Then there exists $y \leq \ell_1$ and $z \geq \ell_2$  together with two oriented paths: $\gamma_1$   from $(x,0)$ to $(y, \ell_3)$,  and   $\gamma_2$ from $(x,0)$ to $(z, \ell_3)$. Now, let $u$ be any element of $\eta^{{\bf 1}}_{\ell_3} \cap [\ell_1, \ell_2]$. Then $\ell_1 \leq u \leq \ell_2$ and there exists a vertex $v \in \Z$ and an oriented path $\gamma'$ from $(v,0)$ to $(u, \ell_3)$. The path $\gamma'$ is forced to intersect $\gamma_1$ or $\gamma_2$. In both cases, this implies  the existence of an oriented path from $(x,0)$ to $(u, \ell_3)$. Hence $u \in \eta^{x}_{\ell_3} \cap [\ell_1, \ell_2]$. We have just proved that 
\begin{equation*} \label{mnx}
\eta^{{\bf 1}}_{\ell_3} \cap [\ell_1, \ell_2] \subset \eta^{x}_{\ell_3} \cap [\ell_1, \ell_2].
\end{equation*}
The reverse is trivial,  hence  we obtain (a).

The  claim (b) follows from a result of Durrett and Schonmann    \cite[Theorem 1]{DS}. (Note that in \cite{DS}, the result is proved   for the contact process, but as mentioned by the authors the proof works just as well for  oriented percolation).  In fact, it still holds if we replace $3/4$ by  any $\alpha'< \alpha$. To prove (c), we observe that if $\eta^x_{\ell_3} \neq \varnothing$ then 
\begin{align*}
r^x_{\ell_3} = r^{(- \infty, x]}_{\ell_3}.
\end{align*} 
Moreover, by the main result of Section 11 in \cite{D}, there is a positive constant $c$, such that for all   integer $x$,
\begin{align*}
\pp \left(r^{(- \infty, x]}_{\ell_3} \leq x + \alpha \ell_3/2 \right) \leq \exp(-c \ell).
\end{align*}
Therefore if $x \in [\ell_1, \ell_2]$, then
\begin{align*}
\pp \left(r^{x}_{\ell_3} \geq \ell_2 \, \big | \, \eta^x_{\ell_3} \neq \varnothing  \right) \geq 1- \exp(-c \ell),
\end{align*}
since $x+ \alpha \ell_3/2 \geq \ell_1 + \alpha \ell_3/2 \geq \ell_2$. Similarly
\begin{align*}
\pp \left(l^{x}_{\ell_3} \leq \ell_1 \, \big | \, \eta^x_{\ell_3} \neq \varnothing \right)  \geq 1- \exp(-c \ell).
\end{align*}
Then the  claim (c) follows from the last two estimates.

Now we  prove \eqref{eal2} by using the same arguments as in Section 10 in \cite{D}. We say that $A$ is more spread out than $B$ (and write $A \succ B$) if there is an increasing function $\varphi$ from $B$ into $A$ such that  $|\varphi(x) - \varphi(y)| \geq |x-y|$ for all $x,y \in B$. (Note that this implies $|A| \geq |B|$).  In \cite{D}, Durrett proves that there is a coupling such that if $A \succ B$ then 
\begin{align*}
\eta^A_t \succ \eta^B_t \textrm{ for all } t \geq 0,
\end{align*} 
and as a consequence $|\eta^A_t| \geq |\eta^B_t|$ for all $t$. Hence 
\begin{align*}
\pp(\eta^A_t = \varnothing) \leq \pp(\eta^B_t = \varnothing).
\end{align*}
On the other hand, by (ii)
\begin{align*}
\pp \left(\eta^{[\ell_1, \ell_1+ \ell_4]}_{\ell_3} = \varnothing \right) \leq \exp(- c \ell),
\end{align*}
with $\ell_4 = \lf 3 \alpha \ell/32 \rf-1$. We observe that $A \succ [\ell_1, \ell_1+ \ell_4]$ for any $A$ with $|A| \geq 3 \alpha \ell/32$. Thus \eqref{eal2} follows from the last two inequalities.
\end{proof}

\section{Existence of a key subgraph}
In this section, by using a result on the existence of  long paths in a super-critical site percolation, we will show that the random geometric graph contains a key subgraph composed of $ \lceil cn/R^d \rceil $  complete graphs of size  $ \lf cR^d \rf$, with some $c>0$.

\vspace{0.2 cm}
 The Bernoulli site percolation on $\mathbb{Z}^d$ with parameter $p$ is defined as usual: designate each vertex in $\mathbb{Z}^d$  to be open independently with probability $p$ and closed otherwise. A path in $\mathbb{Z}^d$ is called open if all its sites are open.  Then there is a critical value $p_c^s(d) \in (0,1)$, such that if $p>p_c^s(d)$, then a.s.\ there exists an infinite open path (cluster), whereas if $p<p_c^s(d)$, a.s.\ there is no infinite cluster. 
\begin{lem} \label{lpp}
Consider the Bernoulli site percolation on $[0,n ] ^d$ with $d \geq 2$ and  $p > p^s_c(2)$. Then there exists a positive constant $\rho=\rho(p,d)$, such that w.h.p.\ there is an open path whose length is larger than $\rho n^{d}$. 
 \end{lem}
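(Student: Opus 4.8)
The plan is to reduce the problem to a statement about two-dimensional site percolation, since the hypothesis is $p > p_c^s(2)$ rather than $p_c^s(d)$. First I would fix a two-dimensional coordinate plane inside $\llbracket 0,n\rrbracket^d$, say the one spanned by the first two coordinates with all other coordinates set to $0$, giving a copy of $\llbracket 0,n\rrbracket^2$ on which the restricted percolation is again Bernoulli with the same parameter $p > p_c^s(2)$. By the supercritical phase of planar site percolation, w.h.p. this square contains a "large" open cluster: in fact, standard crossing/renormalization estimates (e.g. the block argument behind the proof that the infinite cluster has positive density) show that w.h.p. there is an open cluster $\kC$ in $\llbracket 0,n\rrbracket^2$ with $|\kC| \geq \rho_0 n^2$ for some $\rho_0 = \rho_0(p) > 0$. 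This is the input I would take as known from the percolation literature; it is the analogue, in a box, of the fact that $\theta(p) > 0$ and that the cluster density concentrates.

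The next step is to convert "a large cluster" into "a long self-avoiding open path." A connected set of $N$ open sites in $\Z^2$ need not contain a path visiting all of them, but it does contain a self-avoiding open path of length at least of order $N$: traverse a spanning tree of the cluster by a depth-first walk, which visits every edge of the tree exactly twice and thus has length $\geq 2(N-1)$; this closed walk passes through all $N$ vertices, and from it one extracts a self-avoiding path through a constant fraction of the vertices. (Alternatively, one can run directly a depth-first exploration of the cluster and note that the deepest branch, or the concatenation argument, yields a simple path of length $\gtrsim N$.) Applying this to $\kC$ gives an open self-avoiding path in $\llbracket 0,n\rrbracket^2 \subset \llbracket 0,n\rrbracket^d$ of length at least $\rho n^2$ for a suitable $\rho$.

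Finally, to obtain length $\rho n^d$ rather than $\rho n^2$ when $d \geq 3$, I would tile $\llbracket 0,n\rrbracket^d$ by roughly $n^{d-2}$ disjoint translates of a planar slab of the form $\llbracket 0,n\rrbracket^2 \times \{z\}$ (one for each choice of the last $d-2$ coordinates), perform the construction above independently in each slab, and then join consecutive long paths: between the endpoint of the path in one slab and the startpoint of the path in the next, only $O(1)$ extra sites must be open, which happens with probability bounded below, and w.h.p. a positive fraction of the $\asymp n^{d-2}$ junctions succeed. Concatenating along a successful chain of slabs produces a single open path of length $\asymp n^{d-2} \cdot n^2 = n^d$. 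The main obstacle I anticipate is the bookkeeping in this gluing step — ensuring the slabs and connector segments are disjoint so the relevant events are independent, and controlling the (small) probability that too many junctions fail — but this is routine once the planar cluster-density estimate and the spanning-tree path-extraction are in place. One can streamline matters by instead working with a single slab and a renormalized block percolation on $\Z^d$ directly, but the slab-by-slab argument is the most transparent.
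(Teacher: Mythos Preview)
Your proposal has two genuine gaps. The first is in your step 2: the claim that a connected open set of $N$ vertices contains a self-avoiding path of length $\Omega(N)$ does not follow from the DFS argument you sketch. The DFS walk on a spanning tree has length $2(N-1)$ but revisits vertices, and one cannot in general extract from it a self-avoiding path through a constant fraction of the vertices---a bounded-degree tree on $N$ vertices can have diameter $O(\log N)$ (think of a balanced ternary tree), so its longest simple path is $O(\log N)$. The existence of an open path of length $\asymp n^2$ in a supercritical box of side $n$ is a genuine theorem of Grimmett \cite{G2}; it is not a consequence of the giant cluster having $\asymp n^2$ vertices. The paper takes exactly this result as the base case $d=2$ of an induction on the dimension.

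The second gap is in your step 4. The endpoints of the long paths in adjacent slabs $[0,n]^2 \times \{z\}$ and $[0,n]^2 \times \{z+1\}$ can be at distance $\Theta(n)$ from one another, not $O(1)$, so joining them requires $\Theta(n)$ additional open sites along a connector, an event of probability $p^{\Theta(n)}$ rather than a constant; moreover those connector sites may collide with the paths already built, destroying self-avoidance. The paper resolves this by strengthening the inductive hypothesis: a box is called $\rho$-good if its long open path decomposes into three pieces, the outer two lying in thin slabs of width $m=[n^{1/4}]$ near opposite faces. In the inductive step, open left-right and top-bottom crossings of narrow rectangles straddling these thin slabs (which exist w.h.p., in disjoint planes hence independently) then necessarily meet the end-pieces of the long paths in consecutive planes---via uniqueness of the large component in those rectangles---and meet each other, which is what makes the gluing work. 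Controlling where each long path enters and exits its slab is precisely the idea your outline is missing.
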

 
\vspace{0.2 cm} 
Now, we define the key subgraph. For $\ell,m \in \mathbb{N}$, we denote by $\kC(\ell, m)$  the graph obtained by glueing a  complete graph of size $ m$  to each vertex in a path of length $\ell$.  
\begin{lem} \label{p2}
Suppose that $d\geq 2$ and  $g$ satisfies \eqref{cdg}. Then there exist  positive constants $c$ and $C$, such that if $n \geq R^d \ge C$ then w.h.p.\  $G(n,R,g)$ contains as a subgraph a copy of $\kC( \lceil c n R^{-d} \rceil, \lf c R^d \rf)$.
\end{lem}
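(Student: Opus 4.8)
The plan is to locate inside $G(n,R,g)$ a chain of about $nR^{-d}$ pairwise disjoint balls of radius of order $R$, each holding at least of order $R^d$ points of the Poisson process --- hence spanning a clique of that size --- with consecutive balls completely joined by edges of $G(n,R,g)$; reading off one vertex per ball as a backbone then exhibits a copy of $\kC$. The one genuine difficulty is that when $R^d$ is only bounded below by a constant we cannot force \emph{every} ball to be rich in points, only those visited by a good path, and this is exactly where the supercritical site percolation of Lemma~\ref{lpp} is needed. We may assume $R^d\le K_0\log n$ for a suitable $K_0=K_0(d,b)$: if instead $R^d>K_0\log n$, each of the (at most $n$) balls built below is rich except with probability $\exp(-\Omega(R^d))=o(n^{-1})$, so they are all rich w.h.p.\ and the required chain is routed through the whole cell-grid directly; and under $R^d\le K_0\log n$ the grid side $N$ below tends to infinity.

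For the geometry, fix $s=R/2$ and tile $[0,\sqrt[d]{n}]^d$ by $N^d$ closed cubes $(Q_i)_{i\in\llbracket 0,N-1\rrbracket^d}$ of side $s$, with $N=[\,n^{1/d}/s\,]\asymp n^{1/d}R^{-1}$, discarding the incomplete boundary cubes; let $B_i\subset Q_i$ be the concentric ball of radius $r=R/5$. Since $2r<R$, points of the process lying in a common $B_i$ are pairwise adjacent in $G(n,R,g)$, so they span a clique; since $2r<s$ the balls are pairwise disjoint and contained in their cubes; and if $Q_i,Q_j$ are face-adjacent their centres are at distance $s$, so any point of $B_i$ and any point of $B_j$ are at distance at most $s+2r=9R/10<R$ and hence adjacent. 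Thus two face-adjacent, non-empty balls are completely joined in $G(n,R,g)$.

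Call the index $i$ \emph{open} if $B_i$ contains at least $m:=[\,c_1R^d\,]$ points, and closed otherwise. By \eqref{cdg} the number of points of $B_i$ stochastically dominates a $\poi(b\,\omega_d(R/5)^d)$ variable, so for $c_1=c_1(d,b)$ small enough a Chernoff bound gives $\pp(i\text{ open})\ge 1-\exp(-c_2R^d)$ with $c_2=c_2(d,b)>0$; hence there is $C_1=C_1(d,b,B)$ for which $R^d\ge C_1$ forces $\pp(i\text{ open})>p^s_c(2)$. As the $B_i$ lie in disjoint cubes, $(\mathbf 1\{i\text{ open}\})_i$ is an i.i.d.\ family, i.e.\ a supercritical Bernoulli site percolation on $\llbracket 0,N-1\rrbracket^d$, and Lemma~\ref{lpp} (applied with $N$ in place of $n$) produces, w.h.p., an open path --- which we may take self-avoiding --- $i_0,i_1,\dots,i_L$ with $L>\rho N^d$ and consecutive $i_t,i_{t+1}$ face-adjacent in $\Z^d$. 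Since $N^d\ge nR^{-d}$, shrinking $c_1$ below $\rho$ gives $L\ge[\,c_1nR^{-d}\,]$.

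To assemble the subgraph, in each ball $B_{i_t}$ of the path pick a point $v_t$ together with $m-1$ further points of $B_{i_t}$: they span a copy of $K_m$ through $v_t$. The cliques for distinct $t$ are vertex-disjoint because the balls are, and $v_t$ is adjacent to $v_{t+1}$ for every $t$ by the geometry above; so $v_0,\dots,v_L$ with their attached cliques realise a copy of $\kC(L,m)$, which contains a copy of $\kC([\,c_1nR^{-d}\,],[\,c_1R^d\,])$. I expect the only real work to lie in the percolation step --- choosing the constants so that a $\poi(\Theta(R^d))$ count exceeds $c_1R^d$ with probability above $p^s_c(2)$ once $R^d\ge C_1$, and checking that the percolation density $\rho$ from Lemma~\ref{lpp} stays bounded away from $0$ along the relevant parameters --- whereas the Euclidean estimates, the independence of the cell states, and the extraction of a self-avoiding path from Lemma~\ref{lpp} are routine.
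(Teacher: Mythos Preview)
Your proof is correct and follows essentially the same route as the paper: tile the big cube into small cells of side of order $R$, declare a cell open when it carries $\Theta(R^d)$ Poisson points, observe that the resulting i.i.d.\ field dominates supercritical Bernoulli site percolation once $R^d\ge C_1$, and invoke Lemma~\ref{lpp} to thread a long open path that yields the chain of cliques. The only cosmetic differences are that the paper uses cubes of side $R/(2\sqrt d)$ directly as cliques (rather than inscribing balls of radius $R/5$ in cubes of side $R/2$) and splits cases on whether $n/R^d$ is bounded (rather than on $R^d$ versus $K_0\log n$); your extra case split is harmless but not needed, since the percolation argument already covers every regime in which the grid side $N\to\infty$.
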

\begin{proof} If $n/R^d$ is bounded from above, then w.h.p.\ $G(n,R,g)$ contains a clique of size of order $n$ and thus the result follows. Indeed, by definition the vertices in $A=[0, R/\sqrt{d}]^d$  form a complete graph. Moreover  the number of vertices in $A$ is a Poisson random variable with mean $\int_{A} g(x) dx = \Theta( R^d ) = \Theta( n)$, and hence w.h.p.\ it is of order $n$.

We now assume that $n/R^d $ tends to infinity. Let $\ell = \lf \sqrt[d]{n}/(R/2\sqrt{d}) \rf$, we divide the box $[ 0, \sqrt[d]{n}]^d$ into $\ell^d$  smaller boxes of equal size, numerated by $(E_a)_{a \in [ 1, \ell ] ^d}$, whose side length  is $R/(2\sqrt{d})$. We see that if $v$ and $w$ are  in the same small box or in  adjacent ones, then $\|v-w \| \leq R$, hence these two vertices are connected. This implies that the vertices on a small box form a clique and  two adjacent cliques are connected. 

 For any  $a \in [1, \ell]^d$, let us denote by 
$$X_a = \# \{ v: v \in E_a\}$$ 
the number of vertices located in $E_a$. Then   $(X_a)$ are independent and  $X_a$ is a Poisson random variable with mean 
\begin{align} \label{mua}
\mu_a = \int_{E_a} g(x) dx \geq b \left( \frac{R}{2 \sqrt{d}} \right)^d =: \mu, 
\end{align}
since $g(x) \geq b$ for all $x$. For any $a$, we define
$$Y_a = 1(\{ X_a \geq \mu /2 \}).$$
Since $\pp(\poi(\mu) \geq \mu/2) \rightarrow 1$ as $\mu \rightarrow \infty$, it follows from \eqref{mua} that  $\pp(Y_a =1) \rightarrow 1$ as $R \rightarrow \infty$. Therefore  there is a positive constant $C$, such that if $R^d \geq C$, then 
\begin{align*}
\pp(Y_a =1) \geq p:= (1+p^s_c(2))/2.
\end{align*}
We note that the Bernoulli random variables $(Y_a)$ are independent. Hence  if we say the small box $E_a$ open when $Y_a=1$ and closed otherwise, then we get a site percolation on $[1,\ell]^d$ which stochastically dominates the Bernoulli site percolation on $[1,\ell]^d$ with parameter $p> p^s_c(2)$. Then  Lemma \ref{lpp} gives that w.h.p.\ there is an open path of length $ \rho \ell^d = \Theta( nR^{-d})$. On the other hand, in each open box, there is a clique of size $\mu/2 \Theta( R^d)$ and these cliques in adjacent open boxes are connected. Hence, the result follows by taking $c$ small enough.
 \end{proof}

 \vspace{0.2 cm}
\noindent {\it Proof of Lemma \ref{lpp}}.
We set $m= \lf n^{1/4} \rf $. For $n,d \geq 2$, we say that the box $[0,n]^d$ is {\bf  $\rho$-good} if the site percolation cluster on it satisfies: 
 
there exist two vertices $x $ in $ \{m\} \times [0, n]^{d-1}$ and $y $ in $ \{n-m\} \times [ 0, n]^{d-1}$ and  an open path composed of  three parts: the first one  included in $[0,m] \times [ 0, n]^{d-1}$ has length larger than $m$ and ends at  $x$; the second one included in  $[m, n- m] \times [0, n]^{d-1}$ has length larger than $\rho n^{d}$, starts at $x$ and ends at $y$;  the third one  included in $[n-m,n] \times [ 0, n]^{d-1}$ starts at $y$ and has length larger than $m$.
  
  \vspace{0,2 cm}
 We now prove by induction on $d$ that if $p > p^s_c(2)$,  there is  a  positive constant $\rho_d=\rho(p,d)$, such that w.h.p.\ the box  $[0,n]^d$ is $\rho_d$-good. Then Lemma \ref{lpp} immediately follows. 

\vspace{0,2 cm}
 When $d=2$, the  statement is proved by Grimmett  in \cite[Theorem 1]{G2}.  We will prove it for $d=3$, the proof for $d\geq 4$ is exactly the same and will not be reproduced here.    
 
 \vspace{0,2 cm}
For $1 \leq i \leq n$, let $\Lambda_i = \{i\} \times [ 2 m, n- 2 m ]^2$. We define 
$$n_1= n-4m \quad \textrm{and} \quad m_1= \lf n_1^{1/4} \rf.$$
We say that the $i^{th}$ plane is  {\bf nice} (or $\Lambda_i$ is nice) if the site percolation on this plane satisfies:  $\Lambda_i$ is $\rho_2$-good (we consider $\Lambda_i$ as a box in $\mathbb{Z}^2$), and in each of the rectangles $\{i\} \times [m, 2m+m_1]\times [0,n]$ and $\{i\} \times [n- 2m-m_1,n-m]\times [0,n]$, there is a unique connected component of size larger than $m_1$, see Figure \ref{f1} for a sample of a nice plane.

The result for $d=2$ implies that w.h.p.\ $\Lambda_i$ is  $\rho_2$-good. On the other hand, we know that w.h.p.\ in the percolation on a box of size $n$ there is a unique open cluster having diameter  larger than $C \log n$ for some $C$ large enough (see for example Theorem 7.61 in \cite{G1}). Thus w.h.p.\ there is a unique open cluster of size larger $(C \log n)^d$.  Hence  $\Lambda_i$  is nice w.h.p.\ for all $i=1, \ldots, n$. Moreover, the events $\{ \Lambda_i \textrm{ is nice}\}$ are independent since the planes are disjoint. Therefore $\kA_n$ holds w.h.p.\ with 
\begin{align*}
\kA_n = \{ \# \{i: m \leq i \leq n-m,  \Lambda_i \textrm{ is nice}\} \geq n/2 \}.
\end{align*}
On $\kA_n$, there are more than $n/2$ disjoint open paths (they are in disjoint planes), each of which has length larger than $\rho_2n_1^2$. Thus, to obtain an open path of length of order $n^3$, we will glue these long paths using  shorter paths in  good boxes of nice planes. To do that, we define  

$\kB_n=\{$ for all $1 \leq i \leq \lf n/2 \rf $, there exist open paths: $\ell_{2i-1} \subset \{2i-1\} \times [ m, 2m] \times [0,n]$ whose  end vertices are  $u$ and $v$ with third coordinates $0$ and $n$ respectively; $\ell_{2i-1}' \subset \{2i-1\} \times [n- 2m,n-m] \times [0,n]$ whose end vertices are $u'$ and $v'$ with third coordinates  $0$ and $n$ respectively; $\ell_{2i} \subset \{2i\} \times [m,2 m] \times [0,n]$ whose end vertices are $z$ and $t$ with second coordinates $m$ and $2m$ respectively; $\ell_{2i}' \subset \{2i\} \times [n- 2m,n-m] \times [0,n]$ whose end vertices are $z'$ and $t'$ with second coordinates $n- 2m$ and $n-m$ respectively $\}$.

   We observe that  $\ell_{2i-1}$ is a bottom-top crossing and $\ell_{2i}$ is a left-right crossing in two consecutive rectangles. Then they intersect when we consider only the last two coordinates, and the same holds for $\ell'_{2i-1}$ and $\ell'_{2i}$. Hence on $\kB_n$, for all $1 \leq i \leq n-2$, there exist $a_i \in [m, 2m] \times [0,n]$ and $ b_i \in [n-2m, n-m] \times [0,n]$, such that 
\begin{align*}
(i,a_i) \in \ell_i \textrm{ and } (i+1,a_i) \in \ell_{i+1}, \\
(i,b_i) \in \ell_i' \textrm{ and } (i+1,b_i) \in \ell_{i+1}'. 
\end{align*}
In other word, we can jump from the $i^{th}$ plane to the next one in two ways. Moreover, on $\kA_n$  for all $i$ such that the $i^{th}$ plane is nice, the first part of the long open path in $\Lambda_i$ is connected to $\ell_i$ (as these paths  are in the same rectangle $\{i\} \times [m,2m+m_1] \times [0,n]$ and have length larger than $m_1$), and similarly the third part is connected to $\ell'_i$, see Figure \ref{f1}.
\begin{center}
\begin{figure}
\includegraphics[scale=0.5]{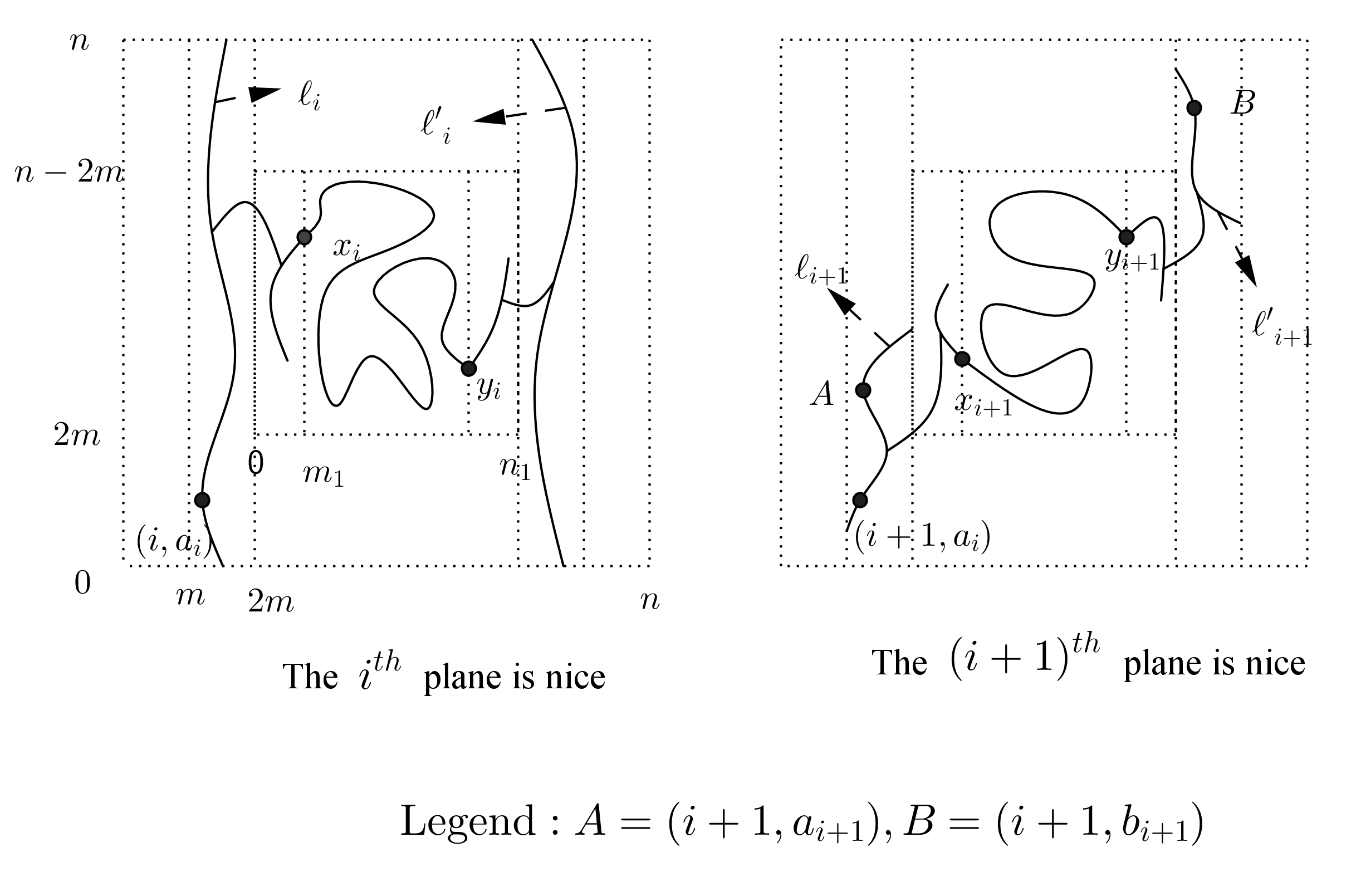}
\caption{Gluing two long paths.}
\label{f1}
\end{figure}
\end{center}
On $\kA_n \cap \kB_n$, we can find in $[m,n-m] \times [0,n]^2$ a path of length larger than $\rho_2 n^{3}/3$. Indeed, let $i$ be the first index, such that $i \geq m$ and $\Lambda_i$ is nice. We start at an end point, from the right for example, of the long path in $\Lambda_i$, then go along this long path towards the other end point. Then we can go to $\ell_i$ and arrive at $(i,a_i)$. Now we jump to $(i+1,a_i)$ (recall that it is a neighbor of $(i,a_i)$). If the $(i+1)^{th}$ plane is not nice, we go to $(i+1,a_{i+1})$ to jump to the next plane (note that both $(i+1,a_i)$ and $(i+1,a_{i+1})$ are in $\ell_{i+1}$). If the $(i+1)^{th}$ plane is nice, we now can touch and then go along to the long path in this plane and arrive at $(i+1,b_{i+1})$ to jump to the next plane. By continuing  this procedure, we can go through all the long paths of nice planes in the definition of $\kA_n$. The resulting path is in $[m,n-m] \times [0,n]^2$ and has length larger than $\rho_2 n^{3}/3$.
 
 Moreover, in the slabs $[0,m] \times [0,n]^2$ and $[n-m,n] \times [0,n]^2$, w.h.p.\ we can find two paths of length larger than $m$ which are connected to the long path we have just found above. These paths  form  the required three-parts long path. Therefore on $\kA_n \cap \kB_n$, w.h.p.\ the box $[0,n]^3$ is $\rho_3$-good with $\rho_3=\rho_2/3$.
  
Now it remains to show that $\kB_n$ holds w.h.p. We observe that the probability of the existence of such a path $\ell_i$ is larger than $1- \exp(-c m)$ for some $c>0$ (see for instance (7.70) in \cite{G1}). Thus $\kB_n$ holds w.h.p.

We summary here the change of proving the induction  from $d-1$  to $d$ when $d\geq 4$. First, in the definition of a {\it nice} box, we consider 
$$\Lambda_i = \{i \} \times [2m, n-2m]^{d-1},$$
and the uniqueness of the connected component of size larger than $m_1$ in the slabs $\{i\} \times [m,2m+m_1] \times [0,n]^{d-2}$ and $\{i\} \times [n-2m-m_1,n-m] \times [0,n]^{d-2}$. Secondly, in the definition of $\kB_n$, we consider $\ell_i \subset \{i\} \times \{m\}^{d-3} \times [0,n]^2$  and $\ell'_i \subset \{i\} \times \{n-m\}^{d-3} \times [0,n]^2$, two bottom-top (resp.\ left-right) crossings in the last two coordinates when $i$ is odd (resp.\ even).  
\hfill $\square$

\section{Contact process on the key subgraph}
In this section, we  study the extinction time and the metastability of the  contact process on  key subgraphs defined in the previous section. 
\begin{lem} \label{tng} Let $\tau_{\ell,M}$ be the extinction time of the contact process on $\kC(\ell,M)$ starting from full occupancy. Then there exist positive constants $c$ and $K$ independent of $\lambda$, such that if $ \bar{\lambda}M \geq K $, then
\begin{align} \label{taun}
\pp \Big(\tau_{\ell,M} \geq \exp \big(c \ell M \log (\bar{\lambda} M)\big)\Big) \geq 1 - \exp\big(-c \ell M \log (\bar{\lambda} M)\big),
\end{align}
with $\bar{\lambda}= \lambda \wedge 1.$
\end{lem}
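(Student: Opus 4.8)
The plan is a space--time renormalisation that compares the contact process on $\kC(\ell,M)$ with a supercritical oriented percolation on $[0,\ell]$, and then feeds the result into Lemma \ref{lc}(ii). We may assume $\lambda\le 1$: for $\lambda>1$ one has $M\ge 640$ in all the situations where this lemma is used, and the argument below goes through unchanged since Lemma \ref{l5} and its proof remain valid when $M\ge 640$. Then $\lambda M\ge K\ge 640$ forces $M\ge 640$, so Lemma \ref{l5} applies directly. Put $T:=T_M=\exp\!\big(M\log(\lambda M)/16\big)$ and label the $\ell+1$ cliques of $\kC(\ell,M)$ by $i\in[0,\ell]$; say clique $i$ is \emph{good at time $kT$} if it carries at least $M/4$ infected vertices at that instant. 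Since the process starts from full occupancy, every clique is good at time $0$.

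For the renormalisation, work with blocks equal to one clique and time increments $T$. Run the graphical construction up to time $kT$ and condition on the current infected configuration. If clique $i$ is good at time $kT$, then, applying Lemma \ref{l5}(ii) to clique $i$, clique $i+1$ and the single edge between them (a copy of $K_{M,M}$, and using monotonicity to discard the bonds to the remaining cliques, which can only create further infections), clique $i+1$ is good at time $(k+1)T$ with conditional probability at least $1-5T^{-1}$; likewise for clique $i-1$. These events are measurable with respect to the graphical representation in the slab $[kT,(k+1)T]$ over a bounded number of neighbouring cliques together with the configuration at time $kT$, hence they have short-range dependence in the renormalised lattice. By the standard comparison theorem for short-range-dependent oriented percolation (see e.g.\ \cite{L}), the process $k\mapsto\{\text{good cliques at time }kT\}$, which equals $[0,\ell]$ at time $0$, stochastically dominates a Bernoulli oriented percolation $(\eta_k)$ on $[0,\ell]$ with parameter $q$, and $1-q$ can be made as small as we please when the local failure probability $5T^{-1}$ is small; in fact $1-q\le C\,T^{-\alpha}$ for universal constants $C,\alpha>0$. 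Since $T$ is enormous, $q\ge 1-\varepsilon$ with $\varepsilon$ as in Lemma \ref{lc}, and we may take $\eta_0=[0,\ell]$.

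It remains to read off the bound. By Lemma \ref{lc}(ii), $\pp(\eta_{t_\ell}\neq\varnothing)\ge 1-1/t_\ell$ with $t_\ell=[(1-q)^{-c\ell}]$; on this event some clique is still good at real time $t_\ell T$, so the contact process has not died out by then, i.e.\ $\tau_{\ell,M}\ge t_\ell T$ with probability at least $1-1/t_\ell$. Now $t_\ell\ge\tfrac12\,C^{-c\ell}T^{\alpha c\ell}$ and $\log T=\tfrac1{16}M\log(\lambda M)\ge\tfrac1{16}\log K$, so choosing $K$ large enough (depending only on $c,\alpha,C$) absorbs the factor $C^{-c\ell}$ and yields $t_\ell\ge\exp\!\big(c'\ell M\log(\lambda M)\big)$ for a universal $c'>0$; hence $\tau_{\ell,M}\ge\exp\!\big(c'\ell M\log(\lambda M)\big)$ with probability at least $1-1/t_\ell\to 1$ as $\ell\to\infty$, which is \eqref{taun}. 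The only genuinely delicate point is the middle paragraph: verifying that the good--block events form a short-range-dependent field to which the oriented-percolation comparison applies, and keeping $1-q$ polynomially small in $T^{-1}$. This last point is exactly what makes $\ell$ appear \emph{multiplicatively} in the final exponent: the survival time produced by Lemma \ref{lc}(ii) is $t_\ell=(1-q)^{-c\ell}\asymp T^{c\ell}$, whose logarithm is of order $\ell\,M\log(\lambda M)$ rather than $\ell+M\log(\lambda M)$. Everything else is routine bookkeeping with constants already supplied by Lemmas \ref{l5} and \ref{lc}.
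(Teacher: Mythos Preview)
Your proposal is correct and follows essentially the same route as the paper: define a block process on $[0,\ell]$ with time step $T=\exp(M\log(\lambda M)/16)$, declare a block good (the paper says ``lit'') when its clique carries at least $M/4$ infected sites, use Lemma~\ref{l5}(ii) to bound the one-step transmission probability by $1-5T^{-1}$, invoke the Liggett--Schonmann--Stacey domination to pass to Bernoulli oriented percolation with $1-q\le T^{-\gamma}$, and finish with Lemma~\ref{lc}(ii). The paper makes the block variables slightly more explicit by setting $Z^r_{i,j}=1$ also when $i$ is \emph{not} lit at time $rT$ (so that the conditional lower bound on $\pn(Z^r_{i,j}=1\mid\kF_{rT})$ holds uniformly), but your formulation via ``good cliques dominate oriented percolation'' amounts to the same thing.

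One small quibble: your reduction to $\lambda\le 1$ asserts that Lemma~\ref{l5} ``remains valid when $M\ge 640$'' for $\lambda>1$, but that lemma is stated only for $\lambda\le 1$. The paper instead invokes monotonicity of the contact process in $\lambda$ to reduce to $\lambda\le 1$; you should do the same rather than appeal to an unstated extension of Lemma~\ref{l5}.
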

\begin{proof}
Let $(\xi_t)$ be the contact process on $\kC(\ell,M)$ with parameter $\lambda >0$. It is sufficient to consider the case  $\lambda \leq 1$, since the contact process is monotone in $\lambda$.
We assume also that $M \lambda \geq 640$. 

For $i \in [0, \ell]$, we say that  $i$ is  {\bf lit} at time $t$ (the term is taken from \cite{CD}) if the number of infected vertices in its attached complete graph at time $t$ is larger than $M/4$.   

 Let $T= \exp(M \log ( \lambda M) /16 )$.   For $r \geq 0 $ and $ i,j \in [ 0, \ell]  $ s.t. $ |i-j| = 1$ and $i+ r$ is even, we define 
\begin{align*}
Z^r_{i,j} = & 1( \{\textrm{$i$ is not lit at time } rT \}) \\
+ & 1( \{\textrm{$i$ is lit at time $rT$ and $i$ lights  $j$  at time $(r+1)T$} \}),
\end{align*}
where  "$i$ lights $j$  at time $(r+1)T $"   means that
\begin{align*}
& \left| \left \{  y \in C(j): \exists \, x \in C(i) \cap \xi_{rT}  \textrm{  s.t. } (x, rT) \longleftrightarrow (y,(r+1)T)  
  \textrm{ inside } C(i) \cup C(j) \cup \{i,j\}
 \right \} \right| \\
 & \geq  M/4,
\end{align*}
with $C(i)$ the complete graph attached at $i$.  Then $(Z^r_{i,j})$ naturally define an oriented percolation  by identifying 
\begin{align*}
\{Z^r_{i,j}=1\} \Leftrightarrow \{(i,r) \rightarrow (j,r+1)\}.
\end{align*}
It follows from Lemma \ref{l5} (ii) that
$$ \pn \left(Z^r_{i,j}=1 \, \big | \, \kF_{rT} \right) \geq 1- 5T^{-1} \quad \forall \, r \geq 0 \textrm{ and } |i-j| = 1,$$
where $\kF_t$ denotes the sigma-field generated by the contact process up to time $t$.

Moreover if $x \neq i$ and $y \neq j,$ then $Z^r_{x,y}$ is independent of $Z^r_{i,j}.$ Hence by a result of Liggett, Schonmann and Stacey \cite{LSS} (see also Theorem B26 in \cite{L})  the distribution of the family $(Z^r_{i,j})$ stochastically dominates the  measure of a  Bernoulli oriented percolation with parameter 
$$q \geq 1- T^{- \gamma},$$ 
with  $ \gamma \in (0,1)$. Moreover, if $\lambda M$  is large enough, then $1- T^{- \gamma} > 1 -\varepsilon $, with $\varepsilon$  as in Lemma  \ref{lc}.

 In summary, when $\lambda M $ is large enough, the distribution of $(Z^r_{i,j})$ stochastically dominates the one of an    oriented percolation on $[0, \ell] $ with density close to $1$. On the other hand, it follows from Lemma  $\ref{lc}$ (ii) that the oriented percolation process survives  up to the step 
$$\lf (1-q)^{-c \ell} \rf  \geq \lf T^{c\gamma \ell} \rf \geq \exp(c \gamma \ell M\log (\lambda M)),$$
 with probability larger than 
\[1-\exp(-c \gamma \ell M\log (\lambda M)),\]
for some constant $c>0$. Hence the result follows. 
\end{proof}
We now prove a  metastablity result  for connected graphs containing a copy of  $\kC(\ell,M)$. 
\begin{lem} \label{cvel}
Let $(G_n^0)$ be a sequence of connected graphs, such that $|G_n^0|\le n$, for all $n$. 
Let $\tau_n$ denote the extinction time of the contact process on $G_n^0$ 
starting from full occupancy. Assume that $G^0_n$ contains  a subgraph $H_n$, which is isomorphic to  $\kC(\ell_n,M)$. Then there exists a positive constant $K$, such that if $M \geq K/ (\lambda \wedge 1)$ and
\begin{align} \label{nas}
\frac{\ell_n}{d_n \vee \log n} \rightarrow \infty,
\end{align}
where $d_n = \max_{v \in G_n^0} d(v,H_n)$, then 
\begin{align*} 
\frac{\tau_n}{\en (\tau_n)}\quad  \mathop{\longrightarrow}^{(\kL)}_{n\to \infty} \quad  \kE(1).
\end{align*}
\end{lem}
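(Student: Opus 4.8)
The plan is to deduce the statement from a metastability criterion for the contact process on finite connected graphs, so that the only substantial work is a lower bound on $\en(\tau_n)$. Such criteria (see \cite{MMVY}, and the analogous statements in \cite{CD, C}) assert that $\tau_n/\en(\tau_n)$ converges in distribution to $\kE(1)$ as soon as $\en(\tau_n) \to \infty$ and $r_n = o(\en(\tau_n))$, where $r_n$ denotes the time within which two realizations of the process started from arbitrary nonempty configurations couple on the event that neither has died. The underlying mechanism is a renewal argument: if the process is alive at some time $u$, running it for a further $r_n$ steps either kills it or makes it coincide with the realization started from full occupancy at time $u$; combined with the submultiplicativity $\pp_{\mathbf 1}(\tau_n > s+t) \le \pp_{\mathbf 1}(\tau_n > s)\,\pp_{\mathbf 1}(\tau_n > t)$ — a consequence of attractiveness, $\mathbf 1$ being the maximal configuration, which also forces $\pp_{\mathbf 1}(\tau_n > r_n) \ge 1 - r_n/\en(\tau_n) \to 1$ — this pins down the exponential limit. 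Since on a connected graph with at most $n$ vertices and diameter $d_n$ one has $r_n \le \exp(O(d_n))\,n^{O(1)}$, the criterion applies whenever $\log \en(\tau_n)/(d_n \vee \log n) \to \infty$.

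It therefore suffices to prove $\log\en(\tau_n) \gg d_n \vee \log n$. Because $G^0_n$ contains a copy of $\kC(\ell_n, M)$ as a subgraph and the contact process is monotone under addition of edges, the trace on that copy of the process on $G^0_n$ from full occupancy dominates the contact process on $\kC(\ell_n, M)$ from full occupancy; hence $\tau_n$ stochastically dominates the extinction time $\tau_{\ell_n, M}$ of the latter. As $\lambda M \ge K$, Lemma \ref{tng} gives $\pp(\tau_{\ell_n, M} \ge \exp(c\,\ell_n M \log(\lambda M))) \to 1$, so $\en(\tau_n) \ge \tfrac12\exp(c\,\ell_n M\log(\lambda M))$ for $n$ large. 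Now $\lambda M \ge K$ forces $M\log(\lambda M)$ to be bounded below by a positive constant, so $\log\en(\tau_n) \ge c'' \ell_n$, and \eqref{nas} gives $\log\en(\tau_n)/(d_n \vee \log n) \to \infty$, which is the hypothesis of the criterion. (This also explains the presence of \eqref{nas}: the renewal time may be as large as $\exp(\Theta(d_n))$ and must be negligible next to $\en(\tau_n) \approx \exp(\Theta(\ell_n))$; for $G^0_n = \kC(\ell_n, M)$ one has $d_n \asymp \ell_n$ and the hypothesis genuinely fails.)

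The point requiring care is the coupling input $r_n \le \exp(O(d_n))\,n^{O(1)}$ on the ambient graph $G^0_n$, whose complement of the copy of $\kC(\ell_n, M)$ is uncontrolled: connectedness together with the diameter bound is exactly what lets the infection present in an arbitrary nonempty configuration re-invade all of $G^0_n$ within such a time. If one prefers not to quote a black box, the renewal can be carried out by hand: call a configuration good when its restriction to the copy of $\kC(\ell_n, M)$ has at least $3\beta\ell_n/4$ lit sites (a site being lit when its attached clique carries at least $M/4$ infected vertices, with $\beta$ as in Lemma \ref{lc}(iii)), observe that full occupancy is good, deduce persistence of the good set over a window of length $\asymp s_{\ell_n}\exp(M\log(\lambda M)/16)$ from Lemma \ref{l5}(ii), the Liggett--Schonmann--Stacey comparison \cite{LSS}, and Lemma \ref{lc}(iii) exactly as in the proof of Lemma \ref{tng}, and use $\ell_n \gg d_n \vee \log n$ to re-invade $G^0_n$ from a good configuration inside that window and to absorb the union bounds over the at most $n$ vertices. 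Either way, the heavy lifting has already been done in Lemma \ref{tng}.
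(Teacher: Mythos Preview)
Your high-level strategy is the same as the paper's: invoke a metastability criterion (the paper uses Proposition~1.2 of \cite{M}, which asks for $a_n=o(\en(\tau_n))$ with $\sup_v\pp(\xi^v_{a_n}\neq\xi_{a_n},\,\xi^v_{a_n}\neq\varnothing)=o(1)$), get the lower bound $\log\en(\tau_n)\gtrsim\ell_n$ from Lemma~\ref{tng}, and check the coupling hypothesis. The lower-bound step is correct and essentially identical to the paper's.

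The gap is the coupling step. You assert that on any connected graph with at most $n$ vertices and diameter $d_n$ the coupling time satisfies $r_n\le\exp(O(d_n))\,n^{O(1)}$, but this is not a known black box, and the references you cite do not supply it: long survival of $\xi^v$ does not, on an arbitrary graph, force agreement with $\xi^{\mathbf 1}$, since the uncontrolled part of $G_n^0$ may let $\xi^v$ persist locally without ever spreading. The paper's verification of \eqref{xivxi} is in fact the bulk of the lemma and uses the embedded $\kC(\ell_n,M)$ essentially. It introduces an intermediate scale $k_n=(d_n\vee\log n)\varphi_n\ll\ell_n$, sets $b_n=s_{k_n}T$ with $s_{k_n}$ as in Lemma~\ref{lc}(iii), and proves that on $\{\xi^v_{b_n}\neq\varnothing\}$ one has, with probability $1-o(1/n)$, at least $3\beta k_n/4$ lit sites in $[(1-\beta)k_n/2,(1+\beta)k_n/2]$ at time $b_n$. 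This is obtained by iterating over $K_n\asymp s_{k_n}/\tilde d_n$ disjoint time windows: in each, with probability at least $c\exp(-Cd_nT)$ the surviving process pushes infection across $G_n^0$ to the endpoint $x_0$ of the chain, lights it, and then Lemma~\ref{lc}(i),(iii) drive the lit set across and stabilise it on the first $k_n$ cliques; the number of windows is large enough to make the overall failure probability $o(1/n)$. The dual $\hat\xi^{w,2b_n+1}$ is treated identically on $[b_n+1,2b_n+1]$, the two lit intervals necessarily overlap in at least $\beta k_n/2$ cliques, and one further unit of time lets the forward and dual processes meet inside some $C(x_i)$ with probability $1-o(1/n)$; a union bound over $w$ then yields \eqref{xivxi}. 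Your closing paragraph names the right ingredients (lit sites, Lemma~\ref{lc}(iii), \cite{LSS}) but does not carry out this argument; in particular the intermediate scale $k_n$, the iteration producing the $o(1/n)$ bounds, and the forward--dual meeting step are where the real work lies, not in Lemma~\ref{tng}.
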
  
\begin{proof}
According to a result of Mountford   \cite[Proposition 1.2]{M}, it suffices to show that there exists a sequence $(a_n)$, such that $a_n=o(\en(\tau_n))$ and 
\begin{eqnarray}
\label{xivxi}
\sup_{v\in V_n}\, \pn(\xi^v_{a_n} \neq \xi_{a_n}, \xi^v_{a_n} \neq \varnothing) = o(1),
\end{eqnarray}
where $(\xi_t)_{t\ge 0}$ denotes the process starting from full occupancy.

Set $\bar{\lambda}= \lambda \wedge 1$. By  Lemma \ref{tng}, we get that if $\bar{\lambda}M$ is large enough, then 
\begin{align}
\label{taunDnmax}
\en(\tau_n) \geq \exp(c \ell_n M \log (\bar{\lambda}  M )), 
\end{align}
with $c$ as in this lemma. 
By \eqref{nas}, there is a sequence $(\varphi_n)$ tending to infinity, such that
\begin{align}
\label{Dnmaxdn}
\frac{\ell_n}{k_n} \rightarrow \infty,
\end{align}
with 
\begin{align*}
k_n = \lf (\log n \vee d_n)\varphi_n \rf.
\end{align*}
Now define 
\begin{align*}
b_n= s_{k_n} T \quad \textrm{and}\quad a_n=2b_n+1,  
\end{align*}
with $s_{k_n}$ as in Lemma \ref{lc} (iii) and $T= \exp( M \log(\bar{\lambda }M)/16)$.

Then \eqref{taunDnmax} and \eqref{Dnmaxdn} show that $a_n=o(\en(\tau_n))$, so it remains  to prove \eqref{xivxi} for this choice of $(a_n)$. To this end it is convenient to introduce the dual contact process. 
Given some positive real $t$ and 
$A$ a subset of the vertex set $V_n$ of $G_n$,  the dual process $(\hat{\xi}^{A,t}_s)_{s\le t}$ is defined by 
\[\hat{\xi}^{A,t}_s = \{ v\in V_n : (v,t-s)\longleftrightarrow A \times \{ t \} \},\]
for all $s\le t$. For any $v$, we have
\begin{eqnarray} \label{vkr}
&&\nonumber \pn(\xi^v_{a_n} \neq \xi_{a_n}, \xi^v_{a_n} \neq \varnothing)\\
 &=& \pn (\exists w\in V_n : \xi^v_{a_n}(w) = 0,\, \xi^v_{a_n} \neq \varnothing,\, \hat{\xi}^{w,a_n}_{a_n} \neq \varnothing) \notag\\
&\le & \sum_{w\in V_n} \pn\left(\xi^v_{a_n} \neq \varnothing,\, \hat{\xi}^{w,a_n}_{a_n} \neq \varnothing, \textrm{ and } \hat{\xi}^{w,a_n}_{a_n-t} \cap  \xi^v_t  = \varnothing \textrm{ for all } t\le a_n\right), 
\end{eqnarray}
So let us prove now that the last sum above tends to $0$ when $n\to \infty$.

\vspace{0.2 cm}
By the hypothesis, $G^0_n$ contains a subgraph $H_n$ which is  isomorphic to $\kC(k_n, M)$.  Hence, $H_n$ contains a chain of $k_n+1$ vertices  $x_0, \ldots, x_{k_n}$, such that $x_i$ is connected to $x_{i+1}$ for all $0 \leq i \leq k_n-1$. 
Moreover,  the vertex $x_i$ is attached a  complete graph of size $M$, say $C(x_i)$, for all $0 \leq i \leq k_n$. 

  Now we slightly change the definition of a lit vertex, and say that $x_i$ is lit if the number of its infected neighbors \textit{in $C(x_i)$} is larger than $M/4$ for $i= 0, \ldots, k_n$. 

We first claim that for any $v$
\begin{align}
\pp \big( \kA(v) ^c, \xi^v_{b_n} \neq \varnothing \big)  = o(1/n), \label{vbn1}
\end{align}  
where
\begin{align*}
\kA(v)= \Big \{\xi^v_{b_n} \neq \varnothing,  \,|\{i \in [(1- \beta)k_n/2,(1+ \beta)k_n/2 ]:  x_i \textrm{ is lit at time }b_n \}| \geq 3\beta k_n/4 \Big \},
\end{align*}
with $\beta$ as in Lemma \ref{lc}.

Suppose for a moment that \eqref{vbn1} holds. Then we also have 
\begin{align}
\pp \Big( \hat{\kA}(w) ^c, \hat{\xi}^{w, 2b_n+1}_{b_n} \neq \varnothing \Big)  = o(1/n), \label{vbn2}
\end{align}  
with 
\begin{align*}
\hat{\kA}(w) &= \Big \{\hat{\xi}^{w, 2b_n+1}_{b_n} \neq \varnothing, \, \, \exists \, S \subset [(1- \beta)k_n/2,(1+ \beta)k_n/2 ] \textrm{ with } |S| \geq 3 \beta k_n /4 \textrm{ and } \\
 &  W_i \subset C(x_i) \textrm{ with } |W_i| \geq M/4 \, \forall \, i \in S:  (x,b_n+1) \longleftrightarrow (w, 2b_n+1) \, \forall \, x \in \cup_{i \in S} W_i \Big  \}.
\end{align*}
Note that $\kA(v)$ and $\hat{\kA}(w)$ are independent for all $v$ and $w$. Moreover, on $\kA(v) \cap \hat{\kA}(w)$, there are more than $\beta k_n/2$ vertices which are lit in both the original and the dual processes. More precisely, there is a set $S \subset [(1- \beta)k_n/2,(1+ \beta)k_n/2 ]$ with $|S| \geq \beta k_n/2$ and sets $U_i, W_i \subset C(x_i) $ with $|U_i|, |W_i| \geq M/4$ for all $i \in S$, such that 
\begin{align*}
(v,0) \longleftrightarrow (x, b_n) &\quad \textrm{ for all } \quad x \in \cup_{i \in S} U_i \\ 
(y,b_n+1) \longleftrightarrow (w, 2b_n+1) &\quad \textrm{ for all } \quad y \in \cup_{i \in S} W_i.
\end{align*}
It is not difficult to show that there is a positive constant $c$, such that for any non-empty sets $U_i, W_i \subset C(x_i)$,  
\begin{eqnarray*} 
\pp  \left(U_i \times \{b_n\} \mathop{\longleftrightarrow}^{C(x_i)} W_i \times \{b_n+1\} \right) \geq c,
\end{eqnarray*} 
where the notation 
$$U_i \times \{b_n\} \mathop{\longleftrightarrow}^{C(x_i)} W_i \times \{b_n+1\}$$ 
means that there is an infection path inside $C(x_i)$ from a vertex in $U_i$ at time $b_n$ to a vertex in $W_i$ at time $b_n+1$. 

Moreover, conditionally on the sets $U_i,W_i$, these events are independent. Therefore, 
\begin{align*}
\pp \left(\exists i: U_i \times \{b_n\} \mathop{\longleftrightarrow}^{C(x_i)} W_i \times \{b_n+1\} \Bigm| U_i,W_i \right) \geq 1- (1-c)^{\beta k_n/2} = 1 -o(1/n),
\end{align*}
by our choice of $k_n$. This implies that 
\begin{align} \label{vbn3}
\pp \Big(\kA(v), \hat{\kA}(w), \hat{\xi}^{w,a_n}_{a_n-t} \cap  \xi^v_t  = \varnothing \textrm{ for all } t\le a_n \Big) = o(1/n).
\end{align}
Combining \eqref{vbn1}, \eqref{vbn2} and \eqref{vbn3} we obtain \eqref{vkr}. Hence, now it remains to prove \eqref{vbn1}. 

Fix a vertex $v \in G^0_n$.  We call $i_*$ an index, such that 
\begin{equation} \label{dvxi}
d(v,x_{i_*}) \leq d(v, H_n) +1 \leq d_n +1.
\end{equation}
As in Lemma \ref{tng}, we define an oriented percolation $(\tilde{\eta}_r)_{r \geq 0}$ on $[0,k_n]$ as follows. For $0 \leq i,j \leq k_n$ and $r \geq 0$, such that  $|i-j|=1$ and $i+r$ is even,  we let $Z^r_{i,j}=1$ (or equivalently $(i,r) \rightarrow (j,r+1)$) if either $x_i$ is not lit at time $rT$ or $x_i$ is lit at time $rT$ and $x_i$ lights $x_j$  at time $(r+1)T$.

As in Lemma \ref{tng}, there exists a positive constant $K$, such that if $\bar{\lambda} M\geq K$, then $(\tilde{\eta}_r)$ stochastically dominates a Bernoulli oriented percolation with parameter $1- \varepsilon$, with $\varepsilon$ as in Lemma \ref{lc}.
  
Assume that $d_n$ is even, if not we just take the smallest even integer larger than $d_n$. Then we set 
$$\tilde{d}_n= d_n + 2 k_n.$$
Now define for $k\geq 0,$
\begin{align*}
C_k=  \big \{  \exists \,  r,s \in [k \tilde{d}_n + d_n, (k+1)\tilde{d}_n ] \textrm{ s.t. } \tilde{\eta}^{i_*, k \tilde{d}_n + d_n}_r (0)=1, \tilde{\eta}^{i_*, k \tilde{d}_n + d_n}_s (k_n)=1 \big \},
\end{align*}
where for any $A \subset [0,k_n]$ and $t \geq s\geq 0$,
\begin{align*}
\tilde{\eta}^{A,s}_t = \{x \in [0,k_n]: \exists y \in A, (y,s) \rightarrow (x,t)\}.
\end{align*}
 Then using Lemma \ref{lc} (i), we get
\begin{align} \label{skj}
\pp \left(C_k \Bigm| \kF_{k \tilde{d}_n + d_n} \right) \geq c.
\end{align}
Using the same arguments for the claim (a) in Lemma \ref{lc}, we observe that on $C_k$, 
\begin{align} \label{ibx}
\tilde{\eta}^{{\bf 1}}_r = \tilde{\eta}^{i_*,k \tilde{d}_n + d_n }_r \quad \textrm{ for all } r \geq (k+1) \tilde{d}_n.
\end{align}
Define 
\begin{align*}
\kE= \Big \{ \big|\tilde{\eta}^{{\bf 1}}_{s_{k_n}} \cap [(1- \beta)k_n/2, (1+ \beta)k_n/2] \big| \geq 3 \beta k_n/4 \Big \},
\end{align*}
with $s_{k_n}$ as in Lemma \ref{lc} (iii). Using \eqref{ibx}, we get that on $C_k \cap \kE$, if $(k+1)\tilde{d}_n \leq s_{k_n}$ then
 \begin{align} \label{egs}
 \big|\tilde{\eta}^{i_*,k \tilde{d}_n + d_n }_{s_{k_n}} \cap [(1- \beta)k_n/2, (1+ \beta)k_n/2] \big| \geq 3 \beta k_n/4.
 \end{align}
Let  $K_n=\lf s_{k_n}/\tilde{d}_n \rf$ and for any $0 \leq k \leq K_n-1$, we define
\[A_k=\{\xi^v_{k\tilde{d}_n}\neq \varnothing\},\] 
and 
\begin{align*}
B_k = & \left\{ \xi_{k \tilde{d}_n}^v\times\{k\tilde{d}_n\} \rightarrow (x_{i_*},(k \tilde{d}_n + d_n -1)T) \right\} \cap \left \{x_{i_*} \textrm{ is lit at time } (k \tilde{d}_n + d_n )T \right\} \cap C_k.
\end{align*}
We have 
\begin{align} \label{xka}
\{\xi^v_{b_n} \neq \varnothing\} \subset \bigcap_{k=0}^{K_n-1} A_k.
\end{align}
On the other hand, if $x_{i_*}$ is lit at time $rT$ and $\tilde{\eta}^{i_*,r}_{s}(i)=1$ for $s>r$, then $x_i$ is lit at time $sT$. Hence by \eqref{egs} on $\kE$, if one of the events $(A_k \cap B_k)$ happens then $\kA(v)$ occurs.  Combing this with \eqref{xka}, we get
\begin{align}
\label{inc.bn}
\{\xi^v_{b_n} \neq \varnothing \} \cap \kA(v)^c \,\, \subset \,\,  \kE^c \cup  \left( \bigcap_{k=0}^{K_n-1} A_k \cap B_k^c \right).
\end{align} 
Using Lemma \ref{lc} (iii), we obtain a bound for the first term 
\begin{align} \label{pec}
\pp(\kE^c) \leq \exp(-c k_n) = o(1/n),
\end{align}
by the choice of $k_n$. For the second term,   by using \eqref{dvxi} and a similar argument as for \eqref{kii}, we have
\begin{eqnarray*}
\pn\left((v,t)\rightarrow (x_{i_*},t+(d_n-1)T)\right) \ge \exp(-C (d_n-1)T) \quad \textrm{for any  $t\ge 0$},
\end{eqnarray*}
for some constant $C>0$. On the other hand,    if $x_{i_*}$ is infected at time $t$ then it is lit at time $t+T$ with probability larger than $\exp(-CT)$.
Therefore combing with \eqref{skj}, we get that for any $k\le K_n-1$, 
$$\pn \left(B_k^c \Bigm| \kG_k \right){\bf 1}(A_k) \le 1-c\exp(-Cd_nT),$$
where  $\kG_k = \kF_{k \tilde{d}_n}$.  Iterating this, we get  
\begin{eqnarray} \label{abc}
\pn\left(\bigcap_{k=0}^{K_n-1} A_k \cap B_k^c\right) &\le  & (1-c\exp(-Cd_nT))^{K_n-1} = o(1/n),
\end{eqnarray}
where the last equality follows from the definition of $s_{k_n}$. Combining \eqref{inc.bn}, \eqref{pec} and \eqref{abc}
we get \eqref{vbn1} and finish the proof. 
\end{proof}
\section{Proof of Theorem \ref{trg}}
\subsection{Proof of (i)}
 We first prove the lower bound on $\tau_n$.  By Lemma \ref{p2},  there are positive constants $c $ and $K$, such that if $R^d \geq K/ (\lambda \wedge 1)$, then w.h.p.\ $G(n,R,g)$ contains  a subgraph $H_n$ which is isomorphic to $\kC(\ell_n, M)$, with $\ell_n= \lceil cnR^{-d} \rceil $ and $M= \lf c R^d \rf$. 
  
 If $\ell_n$ is bounded (or $R^d =\Theta( n)$), then $\kC(\ell_n,M)$ contains a complete graph of size of order $n$. Then Lemma \ref{l5} (i) implies that w.h.p.\ the extinction time is larger than $\exp(c n \log (\lambda n) )$, for some $c>0$.

If $\ell_n$ tends to infinity, then the result follows from Lemma \ref{tng}. 
  
   To prove the convergence in law of $\tau_n/\E(\tau_n)$, we recall some known results about the diameter of the giant component and the size of small components in RGGs. There is a positive constant $R_0$, such that if $R>R_0$, then w.h.p.
\begin{itemize}
\item[(a)] the diameter of the largest component is $D_n = \kO(n^{1/d}/R)$, \\
\item[(b)] the size of the second largest component is $\kO((\log n)^{d/(d-1)})$. 
\end{itemize} 
The first claim is proved by  Friedrich,  Sauerwald and  Stauffer in \cite[Corollary 6]{FSS}  and the second one is proved in Penrose's book \cite[Theorem 10.18]{P} when $g\equiv 1$. It is  not hard  to generalize these results for our model with $g$ bounded both from below and above. 

The second claim together with Lemma \ref{bgg} below show that w.h.p.\ the extinction time of the contact process on $G_n$ and on $G^0_n$ - the largest component - are equal. We are now in a position to complete the proof of (i). 
\begin{itemize}
\item[$\bullet$] If $R^d=o(n/ \log n)$, then 
\[\frac{nR^{-d}}{\log n} \rightarrow \infty \quad \textrm{and} \quad \frac{nR^{-d}}{D_n} \rightarrow \infty, \]
since by (a), $D_n = \kO((nR^{-d})^{1/d})$. On the other hand, $\ell_n = \Theta( nR^{-d})$ and $d_n = \max_{v \in G^0_n} d(v,H_n) \leq D_n$. Thus
\begin{align*}
\frac{\ell_n}{d_n \vee \log n} \rightarrow \infty.
\end{align*}
  Therefore, Lemma \ref{cvel} implies the convergence in law of $\tau_n/ \en(\tau_n)$. \\
\item[$\bullet$] If $n/ \log n = \kO(R^d)$, then $H_n$ contains a complete graph of size larger than $\sqrt{n}$. On the other hand, for all $k, \ell \in \mathbb{N}$ the complete graph of size $k \ell$ always contains a copy of $\kC(k,\ell)$. Hence, $G^0_n$ contains a copy of $\kC( \lf n^{1/4} \rf, \lf n^{1/4}\rf)$. We have 
\begin{align*}
\frac{\lf n^{1/4} \rf}{d_n \vee \log n} \rightarrow \infty,
\end{align*}
since 
\[d_n \leq D_n = \kO \Big(\left(nR^{-d}\right)^{1/d} \Big) =\kO \Big(\left( \log n \right)^{1/d}\Big).\]
Thus the result follows from Lemma \ref{cvel}.
\end{itemize}
  \hfill $\square$
\subsection{Proof of (ii)} We prove an upper bound on  the extinction time of the contact process on an arbitrary graph.
\begin{lem} \label{bgg}
Let $\tau_G$ be the extinction time of the contact process on a graph $G=(V,E)$ starting from full occupancy. Then
\begin{itemize}
\item[(a)] $\pp(\tau_G \leq F(|V|,|E|)) \geq 1- \exp(-|V|)$,\\
\item[(b)]$\en(\tau_G) \leq 2 F(|V|,|E|)$
\end{itemize}
with 
\begin{align*}
F(|V|,|E|)= |V| \left( 2+ \frac{4 \lambda |E|}{|V|} \right)^{|V|}.
\end{align*}
\end{lem}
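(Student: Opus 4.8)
\noindent\textit{Proof proposal.}
The plan is to reduce everything to a one–step estimate: from \emph{every} configuration the contact process empties within one unit of time with probability at least $p:=(2+4\lambda|E|/|V|)^{-|V|}=|V|/F(|V|,|E|)$. Granting this, the Markov property applied at integer times gives $\pp(\xi_k\neq\varnothing)\le(1-p)^k$ for all $k\in\N$, where $(\xi_t)$ starts from full occupancy; since $\{\tau_G>t\}=\{\xi_t\neq\varnothing\}$, this yields
$\pp(\tau_G>F)\le(1-p)^{\lfloor F\rfloor}\le\exp(-p\lfloor F\rfloor)\le\exp(-|V|)$ because $pF=|V|$ (a small extra slack — furnished by the estimate below, where $1-e^{-1}$ replaces $\tfrac12$ — absorbs the rounding $\lfloor\cdot\rfloor$, and $|V|\le1$ is immediate), while
$\en(\tau_G)=\sum_{k\ge0}\int_k^{k+1}\pp(\xi_t\neq\varnothing)\,dt\le\sum_{k\ge0}(1-p)^k=1/p\le F\le 2F$.

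For the one–step estimate I would use the graphical construction, with independent rate-$1$ recovery processes $R_v$, $v\in V$, and independent rate-$\lambda$ infection-arrow processes $I_{(u,w)}$, one for each ordered pair $\{u,w\}\in E$. For $v\in V$ let $A_v$ be the event that in $[0,1]$ the vertex $v$ carries at least one recovery mark and that its last recovery mark in $[0,1]$ occurs after every infection arrow into $v$ (every point of $\bigcup_{u\sim v}I_{(u,v)}$ lying in $[0,1]$). On $A:=\bigcap_{v\in V}A_v$ the process is empty at time $1$ regardless of $\xi_0$: after its last recovery mark a vertex is healthy, and thereafter it meets no mark of either kind able to change its state, hence is healthy at time $1$. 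The key point is that the $A_v$ are \emph{independent}, since $A_v$ depends only on $\{R_v\}\cup\{I_{(u,v)}:u\sim v\}$, and these families of Poisson processes are pairwise disjoint as $v$ ranges over $V$.

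It remains to bound $\pp(A_v)$ from below. Let $J\sim\poi(1)$ and $K\sim\poi(\lambda\deg v)$ be, respectively, the number of recovery marks at $v$ and the number of infection arrows into $v$ during $[0,1]$; they are independent, and conditionally on $J=j$, $K=k$ with $j+k\ge1$ the last of these $j+k$ marks is a recovery mark with probability $j/(j+k)$, so
\[
\pp(A_v)=\en\!\left[\frac{J}{J+K}\,{\bf 1}(J\ge1)\right]\ge\pp(J\ge1)\,\en\!\left[\frac1{1+K}\right]=(1-e^{-1})\,\frac{1-e^{-\lambda\deg v}}{\lambda\deg v}\ge\frac{1-e^{-1}}{1+\lambda\deg v},
\]
using $\en[1/(1+K)]=(1-e^{-\mu})/\mu$ for $K\sim\poi(\mu)$ and $1-e^{-\mu}\ge\mu/(1+\mu)$. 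Multiplying over $v$, using the arithmetic–geometric mean inequality and $\sum_v\deg v=2|E|$,
\[
\pp(A)\ge(1-e^{-1})^{|V|}\prod_{v}\frac1{1+\lambda\deg v}\ge\left(\frac{1-e^{-1}}{1+2\lambda|E|/|V|}\right)^{|V|}\ge\left(\frac{1/2}{1+2\lambda|E|/|V|}\right)^{|V|}=p,
\]
which is the claimed one-step bound.

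The main obstacle is the design of the event $A$. The naive sufficient event ``no infection arrow fires in $[0,1]$'' has probability only $e^{-2\lambda|E|}$, far too small: it would give a bound exponential in $|E|$ rather than in $|V|\log(|E|/|V|)$. The ``last mark at each vertex is a recovery'' event is the right one — it still forces global extinction, it splits into genuinely independent per-vertex pieces, and each piece has probability decaying only \emph{polynomially} in the degree, so that $\pp(A)$ is governed by $\prod_v(1+\lambda\deg v)$ and hence, after AM--GM, by $|E|/|V|$. Checking the independence of the $A_v$ and the arithmetic of this last reduction is then routine.
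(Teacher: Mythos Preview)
Your proof is correct and follows the same strategy as the paper: bound from below the probability of extinction in one time unit by an event that factors into independent per-vertex pieces, apply AM--GM to $\prod_v(1+\lambda\deg v)$, and iterate via the Markov property. The only difference is the choice of per-vertex event---the paper takes $\{\sigma_v<\min(\sigma_{v\to},1)\}$ (first recovery at $v$ precedes the first \emph{outgoing} arrow and time $1$), while you take ``last recovery at $v$ follows every \emph{incoming} arrow in $[0,1]$''; both force $\xi_1=\varnothing$, both are independent across $v$, and both give the bound $(2+4\lambda|E|/|V|)^{-|V|}$.
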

\begin{proof}   Observe that (b) is a consequence of (a) and the following. For any $s>0$
\begin{align*}
\en(\tau_G) \leq \frac{s}{\pp(\tau_G \leq s)}.
\end{align*}
This result is  Lemma 4.5  in \cite{MMVY}. 

We now prove (a). Let us denote by $(\xi_t)$ the contact process on $G$ starting with full occupancy. By using  Markov's property and the monotonicity of the contact process, it suffices to show that
\begin{align} \label{tpm}
\pp(\xi_1 = \varnothing) \geq \exp(-|V| \log (2+ 4 \lambda |E|/ |V|)).
\end{align}
 Observe that the process dies at time $1$  if for any vertex $v$, it heals before $1$ and does not infect any neighbor.  Let $\sigma_v$ be the time of the first recovery at $v$, then $\sigma_v \sim \kE(1)$. Let $\sigma_{v \rightarrow}$ be the time of the first  infection spread from $v$ to one of its neighbors. Then it is the minimum of $\deg(v)$ i.i.d.\ exponential random variables with mean $\lambda$ and thus $\sigma_{v \rightarrow} \sim \kE(\lambda \deg(v))$. Moreover $\sigma_v$ and $\sigma_{v\rightarrow}$ are independent. Therefore 
\begin{align*}
\pp(\sigma_v < \min \{\sigma_{v \rightarrow}, 1\})= \frac{1- e^{-(1+ \lambda \deg(v))}}{1+ \lambda \deg(v)}  \geq \frac{1}{2(1 + \lambda \deg(v))}.
\end{align*}
On the other hand, these events $\{\sigma_v < \min \{\sigma_{v \rightarrow}, 1\}\}_v$ are independent. Then using Cauchy's inequality, we get that
\begin{align*} 
\pp(\xi_1 = \varnothing) & \geq \prod_{v \in V} (2 + 2\lambda \deg(v))^{-1} \\
 & \geq \left( \frac{2 |V| + 2 \lambda \sum_{v \in V} \deg(v)}{|V|} \right)^{-|V|} \notag \\
& =  \left( 2+ \frac{4 \lambda |E|}{|V|} \right)^{-|V|},
\end{align*}
which implies \eqref{tpm}.
\end{proof}
  \noindent {\it Proof of Theorem \ref{trg} (ii).} The upper bound on $\tau_n$ follows from Lemma \ref{bgg} and  the following: w.h.p.\  $G(n,R,g)=(V_n, E_n)$ with
\begin{itemize}
\item[$\bullet$] $ |V_n| \leq 2B n$\\
\item[$\bullet$] $|E_n| \leq C n R^d,$
for some $C=C(d,B)$.
\end{itemize}
The first claim is clear, since $|V_n|$ is a  Poisson random variable  with mean $$\int_{[0, \sqrt[d]{n}]^d} g(x) dx \leq B n.$$
For the second one, let $\ell = \lceil \sqrt[d]{n}/R \rceil$. We cover $[0, \sqrt[d]{n}]^d$ using translations by $R/2$ for each coordinate, accounting for $(2 \ell -1)^d$ boxes of volume $R^d$. We observe that   points at distance larger than $R$ are not connected. Hence, $|E_n|$ is less than the sum of the number of edges in the covering small boxes.

 These small boxes are partitioned into $2^d$ groups such that each group contains at most $\ell^d$ disjoint boxes with the same volume $R^d$. 
 
 The number of vertices in each box is stochastically dominated  by $Z$, a  Poisson random variable  with mean $BR^d$, since the integral of $g$ on a box is smaller than $BR^d$ (as $g(x) \leq B$ for all $x$). Hence, the number of edges  in a box is stochastically dominated by $Z^2$.
 
Moreover, in each group the numbers of edges are independent, as the boxes are disjoint. Therefore, using Chebyshev's inequality,  the total number of edges in a group is w.h.p.\ smaller than 
$$2 \ell^d \E(Z^2) = 2 \ell^d(BR^d)(BR^d+1).$$
Hence,  $|E_n|$ is w.h.p.\ less than 
$$2^{d+1} \ell^d (BR^d)(BR^d+1) \leq C nR^d,$$
for some $C=C(d,B)$ large enough. \hfill $\square$
\section{Some extensions}
\subsection{The  one-dimensional case} When $d=1$, RGGs are also called  random interval graphs, see for instance \cite{S}. We have the following result. 
\begin{prop} \label{pd1} Let $d=1$. Consider the contact process on  one-dimensional random geometric graphs $G(n,R,g)$ with  $g$ satisfying \eqref{cdg}. Then there exist positive constants $\varkappa, K, c$ and $C$ depending only on $b$ and $B$, such that the following statements hold.
\begin{itemize}
\item[(i)] If $R  \leq \varkappa \log n$, then w.h.p.\ the number of vertices in the largest component is $o(n^{2/3})$. Thus $\log \tau_n = o(n)$ w.h.p. \\
\item[(ii)] If $R \geq K \log n$, then w.h.p.\ the graph is connected and 
$$ c n \log (\lambda R) \leq \log ( \tau_n) \leq C n \log (\lambda R),$$
 and 
 \[ \frac{\tau_n}{\en(\tau_n)}\ \mathop{\longrightarrow}^{(\mathcal L)}_{n\to \infty}  \  \kE(1). \]
\end{itemize} 
\end{prop}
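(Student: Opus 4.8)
The plan is to prove the two parts by quite different means: a block/percolation decomposition for (i), and the preliminary results of Section~2 --- chiefly Lemmas \ref{l5}, \ref{tng}, \ref{bgg} and \ref{cvel} --- for (ii).

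\textit{Part (i).} First I would tile $[0,n]$ by $N\asymp n/R$ consecutive blocks of length $R$. Since $g\le B$ and $R\le\varkappa\log n$, a given block is empty of points with probability at least $e^{-BR}\ge n^{-B\varkappa}$, and these events are independent over disjoint blocks; moreover two points separated by an empty block are at distance $>R$, so an empty block disconnects the graph. Hence every connected component lies inside a maximal run of non-empty blocks. A first-moment bound shows that w.h.p. the longest such run has at most $2n^{B\varkappa}\log n$ blocks, while a Poisson tail estimate (uniform over blocks) shows each block carries $O(\log n)$ points; so w.h.p. the largest component has $O(n^{B\varkappa}(\log n)^2)=o(n^{2/3})$ vertices once $\varkappa<2/(3B)$. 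To pass to $\log\tau_n=o(n)$ I would write $\tau_n=\max_C\tau_C$ over the components $C$, bound $\E(\tau_C\mid G)\le 2F(|C|,|C|^2)=\exp(O(|C|\log|C|))=e^{o(n)}$ by Lemma \ref{bgg}(b) on the above event, sum over the $O(n)$ components, and apply Markov's inequality.

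\textit{Part (ii): connectivity and the two bounds.} Connectivity for $K$ large (depending only on $b$) is the classical fact that w.h.p. no gap between consecutive points exceeds $R$: cover $[0,n]$ by $O(n/R)$ intervals of length $R/3$ and use $g\ge b$ with a union bound. For the lower bound I would split $[0,n]$ into $\ell_n\asymp n/R$ windows of length $\asymp R$; each carries $\asymp R$ points, hence is a clique, and consecutive windows are adjacent, so $G(n,R,g)$ contains a copy of $\kC(\ell_n,M)$ with $M\asymp R$. By monotonicity of the contact process in the graph, $\tau_n\ge\tau_{\ell_n,M}$, and since $\lambda M\asymp\lambda R\to\infty$, Lemma \ref{tng} gives $\tau_n\ge\exp(c\,\ell_nM\log(\lambda M))=\exp(c'n\log(\lambda R))$ w.h.p.; the degenerate case $n/R=O(1)$ (i.e. $\ell_n$ bounded) is handled directly by Lemma \ref{l5}(i) applied to a clique of size $\asymp n$. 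For the upper bound I would combine Lemma \ref{bgg} with the w.h.p. estimates $|V_n|\asymp n$ and $|E_n|\asymp nR$ --- the latter obtained exactly as in the proof of Theorem \ref{trg}(ii) --- to get $\tau_n\le F(|V_n|,|E_n|)=\exp(O(n\log(\lambda R)))$.

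\textit{Part (ii): the metastability statement.} The plan is to apply Lemma \ref{cvel} with $G_n^0=G(n,R,g)$, which is connected w.h.p. and has diameter $d_n\asymp n/R$. The delicate point --- and the one I expect to be the main obstacle --- is that the embedding used for the lower bound is of no use here, since it yields $\ell_n\asymp n/R\asymp d_n$ and condition \eqref{nas} fails; one instead needs a copy of $\kC(\ell_n,M)$ with $M$ a \emph{fixed} constant ($M\ge K/(\lambda\wedge 1)$ with $K$ the constant of Lemma \ref{cvel}) but with $\ell_n$ \emph{linear} in $n$. I would obtain this by tiling $[0,n]$ with $\Theta(n)$ windows of a large fixed length $w=w(b,M)$, calling a window good if it carries at least $M$ points, and showing by a geometric-tail estimate that w.h.p. every maximal run of bad windows has at most $R/w-2$ windows --- an estimate that uses only $bR\gtrsim\log n$, hence $K\gtrsim 1/b$, and is therefore uniform in $M$. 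The good windows then yield $\ell_n=\Theta(n)$ cliques of size $\ge M$ with consecutive ones completely connected, hence a copy of $\kC(\ell_n,M)$; since $\ell_n/(d_n\vee\log n)\asymp\min(R,n/\log n)\to\infty$, Lemma \ref{cvel} applies and gives $\tau_n/\E(\tau_n)\to\kE(1)$. (The hypothesis $|G_n^0|\le n$ is cosmetic --- replace $n$ by $Bn$, or check that the proof of Lemma \ref{cvel} is insensitive to the normalization.) Apart from this embedding and the bookkeeping needed to keep all constants depending on $b,B$ only, the rest is a routine transcription of the two-dimensional arguments.
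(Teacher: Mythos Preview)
Your arguments for (i) and for the two bounds in (ii) are the paper's arguments, with only cosmetic differences in the cutoffs (you track the maximal run of non-empty blocks as $O(n^{B\varkappa}\log n)$ rather than the paper's cruder $\sqrt n$, and you fill in the appeal to Lemma~\ref{bgg} that the paper leaves implicit).

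The one place where you genuinely diverge is the convergence in law. You correctly observe that the chain $\kC([2n/R],[bR/4])$ built for the lower bound has $\ell_n\asymp d_n$ and so cannot feed into Lemma~\ref{cvel}; your fix is a fresh tiling by windows of a fixed length $w=w(b,M)$ plus a run-length estimate for bad windows. This works, and your uniformity claim on $K$ is right: with $bw=2M$ the bad-window probability is at most $e^{-cM}$, so a run of length $R/w$ has probability at most $e^{-cM\cdot R/w}=e^{-cbR/2}$, and the constraint becomes $K>2/(cb)$ independently of $M$ (the $M$-dependence is absorbed into how large $n$ must be, which is harmless for a limit statement). The paper, however, avoids the second construction altogether. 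Because consecutive intervals $J_i$ in its decomposition are \emph{fully} interconnected, each clique of size $[bR/4]$ can be split into $\asymp R/M_1$ disjoint sub-cliques of the fixed size $M_1$ required by Lemma~\ref{cvel}, and consecutive sub-cliques in the resulting list are still pairwise connected; this refines the existing chain to a copy of $\kC([cn],M_1)$ with no further probabilistic input, after which \eqref{nas} is immediate. Your route is a legitimate alternative, but the subdivision trick is shorter and worth noting --- it recycles the structure already in hand rather than rebuilding it.
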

\begin{proof}
For (i), it is sufficient to consider  $R= \varkappa \log n$ with $\varkappa$ chosen later. We divide $[0,n]$ into $\lf n/R \rf$ intervals of length $R$, denoted by $I_1, \ldots, I_{\lf n/R \rf}$. Then the number of vertices in $I_i$ is a Poisson random variable with mean $\int_{I_i} g(x)dx = \Theta( R)$. Therefore  
\begin{align} \label{bd1}
\pp(\#\{\textrm{vertices in } I_i \} \leq R^2 \textrm{ for all } i = 1, \ldots, \lf n/R \rf) = 1-o(1),
\end{align}
as $R = \Theta( \log n)$. On the other hand, since $\int_{I_i} g(x) dx \leq BR$ for all $i$, the probability for an interval to be empty is larger than $e^{-BR}$. Hence
\begin{align} \label{bd2}
\pp \big(\textrm{there are at most $\lf\sqrt{n}\rf$ consecutive non-empty intervals} \big) &\geq 1- \lf n/R \rf (1-e^{-BR})^{\lf \sqrt{n}\rf} \notag \\
& = 1- o(1),
\end{align}
with $R = \varkappa \log n$ and $\varkappa$ small enough.

We observe that if an interval is empty, then there is no edge between vertices in the left-hand side and the right-hand side of this interval. Thus, it follows from  \eqref{bd1} and \eqref{bd2} that w.h.p.\ the number of vertices in any component is  smaller than $R^2 \lf \sqrt{n} \rf=o(n^{2/3})$.

We now prove (ii). If $R =\Theta( n)$, then the graph contains a complete graph of size of order $n$. Thus using Lemma \ref{l5} (i), we get the lower bound on  $\tau_n$. Assume that $R=o(n)$. We divide $[0,n]$ into $\lf 2n/R\rf$ intervals of length $R/2$, denoted by $J_1, \ldots, J_{[2n/R]}$. Then the numbers of vertices in  these intervals form a sequence of  independent Poisson random variables with mean larger than $bR/2$. For all $i \leq \lf 2n/R \rf$, we define 
\begin{align*}
\{J_i \textrm{ is good}\}= \{\textrm{the number of vertices in $J_i$ is larger than }  bR/4\}.
\end{align*} 
We have 
\begin{align*}
\pp(\poi(bR/2) \geq bR/4) \geq 1- \exp(cR),
\end{align*}
for some constant $c=c(b)>0$. Therefore
\begin{align} \label{ji}
\pp( J_i \textrm{ is good for all } i \leq  \lf 2n/R \rf) \geq 1- \lf 2n/R \rf e^{-cR} = 1-o(1),
\end{align}
with $R \geq K \log n$ and $K$ large enough. This implies that w.h.p.\ $G(n,R,g)$ contains as a subgraph a copy of $\kC(\lf 2n/R \rf, \lf bR/4 \rf)$ (note that the vertices in the same interval or in adjacent ones are connected). Thus similarly to  Theorem \ref{trg}, we get the lower bound on  $\tau_n$. The upper bound also follows from the same argument as in  Theorem \ref{trg}.    

For the connectivity, since all vertices in an interval $J_i$ or in adjacent ones are connected, we observe that 
\begin{align*}
\{J_i \textrm{ is good for all } i \leq \lf 2n/R \rf\} \subset \{J_i \textrm{ is non-empty for all } i \leq \lf 2n/R \rf \} \subset \{G_n \textrm{ is connected}\}.
\end{align*}
Therefore by \eqref{ji}, when $R \geq K \log n$ with $K$  large enough, w.h.p.\ $G_n$ is connected. In addition,  its diameter is $d_n \leq \lceil 2n/R \rceil$.

For the convergence in law of $\tau_n/\en(\tau_n)$, we note that  $\kC(\ell,M)$ always contains  a copy of $\kC(\ell \lf M/M_1 \rf, M_1)$   if $M\geq M_1$. Moreover, w.h.p.\ $G_n$ contains a copy of $\kC( \lf 2n/R \rf, \lf bR/4 \rf)$. Therefore, w.h.p.\ $G_n$ contains a copy of $\kC( \lf cn \rf,M)$ for some $c>0$ and $M$ as in Lemma \ref{cvel}. Hence, $G_n$ satisfies the hypothesis in Lemma \ref{cvel} and  the result follows.
\end{proof}
\subsection{An equivalent model} 
We consider another version of   random geometric graphs  with density function $f$ and connection radius  $r$, denoted by $G'(n,r,f)$. It is defined as follows: place independently $n$ points in $[0,1]^d$ according to $f$, then connect two points $u$ and $v$ by an edge if $\|u-v\| \leq r$. Suppose that 
\begin{align} \label{eqf}
0< b \leq f(x) \leq B< + \infty \quad \textrm{ for all }x. 
\end{align}
Similarly to the results for $G(n,R,g)$,  we have the following.
\begin{prop}
The results of Theorem \ref{trg} and Proposition \ref{pd1}  hold for the graph $G'(n,r,f)$ with $f$ satisfying \eqref{eqf} by replacing $R^d$ by $nr^d$ in the statements. 
\end{prop}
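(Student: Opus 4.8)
The plan is to reduce the statement about $G'(n,r,f)$ to the already-proven results for $G(n,R,g)$ by exhibiting a coupling between the two models, under the identification $R^d \leftrightarrow nr^d$. The key observation is a standard scaling: if we dilate $[0,1]^d$ by the factor $n^{1/d}$, the point set of $G'(n,r,f)$ placed in $[0,1]^d$ is sent to a point set in $[0,n^{1/d}]^d = [0,\sqrt[d]{n}]^d$ with connecting radius $R := n^{1/d} r$, so that $R^d = n r^d$ exactly as required. The only genuine difference is that $G'(n,r,f)$ uses a \emph{fixed} number $n$ of i.i.d.\ points rather than a Poisson number of points; this is handled by the usual Poissonization/de-Poissonization argument. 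First I would record that a Poisson point process of intensity $n f(\cdot)$ on $[0,1]^d$ has a number of atoms concentrated around $n$, and that conditionally on this number being $N$, the atoms are i.i.d.\ with density $f$. Thus the binomial model $G'(n,r,f)$ and a Poissonized version differ only by adding or removing $O(\sqrt{n}\log n)$ vertices w.h.p., and by monotonicity of the contact process in the vertex set (adding vertices can only increase the extinction time, removing them in a ``sparse'' way costs little) the extinction time bounds transfer.

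The concrete steps would be: (1) Set $R = n^{1/d} r$ and $g(x) = f(x/n^{1/d})$ on $[0,\sqrt[d]{n}]^d$, noting that \eqref{eqf} gives \eqref{cdg} for $g$ with the same constants $b,B$; under the dilation, the Poissonized version of $G'(n,r,f)$ is precisely $G(n,R,g)$, and $R^d = nr^d$. (2) For the lower bounds (parts (i) of both results and the connectivity/diameter statements), observe that the subgraph-containment arguments — Lemma \ref{p2} for $d \geq 2$ and the interval-partition argument in Proposition \ref{pd1} — only use the lower bound $g \geq b$ and independence of the box counts; the de-Poissonized (binomial) model still has the property that each of the $O(n/R^d)$ boxes receives $\asymp R^d$ points w.h.p.\ (a Chernoff bound for the multinomial counts, or conditioning the Poisson counts on the total), so the same copy of $\kC(\ell_n, M)$ is found w.h.p.\ and Lemmas \ref{tng}, \ref{cvel} apply verbatim, giving both $\log \tau_n \gtrsim n\log(\lambda R^d) = n\log(\lambda n r^d)$ and the metastable convergence $\tau_n/\en(\tau_n) \to \kE(1)$. (3) For the upper bound (part (ii)), apply Lemma \ref{bgg}: one needs only $|V_n| \leq 2Bn$ (clear, since $|V_n| = n$ in the binomial model, even easier) and $|E_n| \leq C n R^d = C n^2 r^d$, which follows from the same covering-by-boxes and Chebyshev argument, now with multinomial instead of Poisson box counts. (4) For the one-dimensional statements, transfer the emptiness-of-intervals argument of Proposition \ref{pd1}(i) and the good-intervals argument of (ii) in the same way; the binomial model only makes the ``empty interval'' events slightly negatively correlated, which is harmless, or one again passes through the Poissonized version.

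The main obstacle — though it is more bookkeeping than a real difficulty — is the de-Poissonization: one must check that passing from a Poisson$(n\int f) \approx n$ number of points to exactly $n$ points does not destroy any of the w.h.p.\ events used (box counts, empty intervals, edge counts). For the lower bounds this is easy because of monotonicity: $G'(n,r,f)$ contains, w.h.p., a Poissonized RGG on a slightly smaller intensity as a subgraph (or one simply works directly with multinomial concentration, since all the events in question are ``each of polynomially many boxes has the right order of points,'' which holds for i.i.d.\ points by a union bound over Chernoff estimates). For the upper bound it is also easy since $|V_n|=n$ deterministically and the edge-count bound is a one-sided large-deviation estimate that only gets better. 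I would therefore present the proof as: ``By the dilation $x \mapsto n^{1/d}x$ the Poissonized version of $G'(n,r,f)$ is $G(n, n^{1/d}r, g)$ with $g(x) = f(x/n^{1/d})$ satisfying \eqref{cdg}; a routine de-Poissonization (all the box-counting and edge-counting estimates used in Sections 2--3 hold for $n$ i.i.d.\ points by Chernoff bounds and a union bound, and the contact process is monotone in the vertex set) shows the conclusions of Theorem \ref{trg} and Proposition \ref{pd1} carry over with $R^d$ replaced by $nr^d$.''
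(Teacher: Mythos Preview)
Your proposal is correct and in fact contains the paper's argument as one of the options you mention in passing. The paper takes the cleanest of your suggested routes: rather than revisiting any of the box-counting, percolation, or edge-counting estimates for the binomial model, it simply sandwiches $G'(n,r,f)$ between two Poisson models. Concretely, it sets $G_1 = G(n/(2B), R_1, g_1)$ with $R_1^d = nr^d/(2B)$ and $g_1(x) = f(x/\sqrt[d]{n/(2B)})$, so that $|G_1|$ is Poisson with mean at most $n/2$ and hence w.h.p.\ $G_1$ embeds as a subgraph of $G'(n,r,f)$; this yields the lower bounds and the metastable convergence directly from Theorem \ref{trg} and Proposition \ref{pd1} applied to $G_1$. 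For the upper bound it uses $G_2 = G(2n/b, R_2, g_2)$, which w.h.p.\ contains $G'(n,r,f)$ as a subgraph. Your parenthetical remark that ``$G'(n,r,f)$ contains, w.h.p., a Poissonized RGG on a slightly smaller intensity as a subgraph'' is exactly this. Your primary suggestion of redoing the multinomial box-count and edge-count estimates directly would also work (and your observations about negative association are correct), but it is more laborious and unnecessary once the sandwich is in place.
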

\begin{proof}
First, we observe that the law of a Poisson point process with intensity $g$ on a set $A$ conditionally  on its number vertices, say $N$, is the same as that  of the process defined by placing independently $N$ points in $A$ with density $g/ \int_A g$. 

 Therefore, the graph  $G(n,R,g)$ conditionally  on its size $|G|$ is isomorphic to  $G'(|G|,r,f)$ with  
\begin{align*}
r = R/\sqrt[d]{n} \qquad \textrm{ and } \qquad f(x) =g  \left( x \sqrt[d]{n}\right).
\end{align*}
To prove the lower bound on $\tau_n$, we consider $G_1 = G(n_1, R_1,g_1)$, where 
\begin{align*}
n_1= \lf n/(2B) \rf, \qquad R_1 = r \sqrt[d]{n_1} \qquad \textrm{ and } \qquad g_1(x) =f \left( \frac{x}{\sqrt[d]{n_1}}\right).
\end{align*}
Since $|G_1|$ is a Poisson random variable with mean less than $n/2$, w.h.p.\ $|G_1|$ is less than $ n$. Therefore w.h.p.\  $G_1$ can be coupled as a subgraph of  $G'(n, r, f )$. This domination together with Theorem \ref{trg}  and Proposition \ref{pd1}  imply the results for the lower bound on $\tau_n$ and the convergence in law of $\tau_n/\E(\tau_n)$ (note that the results of the connectivity and the diameter  of the largest component or the size of the second largest component also hold in  this model).

Similarly, for the upper bound on $\tau_n$, we consider  $G_2=G(n_2,R_2,g_2)$ with 
\begin{align*}
n_2= \lf 2n/b \rf, \qquad R_2 = r \sqrt[d]{n_2} \qquad \textrm{ and } \qquad g_2(x) =f \left( \frac{x}{\sqrt[d]{n_2}}\right).
\end{align*}
Then w.h.p.\ $G_2$ contains as a subgraph a copy of $G'(n,r,f)$. Thus by applying Theorem \ref{trg} and Proposition \ref{pd1}, we get desired results. 
\end{proof}
 \begin{ack} \emph{
 I am  grateful to Bruno Schapira for  many suggestions during the preparation of this work.  I would like also to thank the anonymous referee and an editor member for carefully reading this manuscript and many valuable comments. This work is supported by the Vietnam National Foundation for Science and Technology Development (NAFOSTED) under  Grant number 101.03--2017.07 }
 \end{ack}

\end{document}